\title[Generic Isotriviality, Families of Group Actions, and Linearization]{Families of Group Actions, Generic Isotriviality, and Linearization}
\author{Hanspeter Kraft and Peter Russell}
\date{January 2012}
\address{Mathematisches Institut der
Universit\"at Basel,\newline
\indent Rheinsprung 21, CH-4051 Basel, Switzerland}
\email{Hanspeter.Kraft@unibas.ch}
\address{McGill University, Department of Mathematics and Statistics
\newline\indent 
805  Sherbrooke Street West, Montreal, QC, Canada, H3A 2K6}
\email{russell@math.mcgill.ca}
\thanks{The first author was partially supported by Swiss National Science Foundation.}
\newtheorem{thm}{Theorem}
\newtheorem*{thm*}{Theorem}
\newtheorem*{thmgenequiv}{Generic Equivalence Theorem}
\newtheorem*{conj*}{Conjecture}
\newtheorem*{prob*}{Problem}
\newtheorem*{satz*}{Satz}
\newtheorem{prop}{Proposition}
\newtheorem{lem}{Lemma}
\newtheorem{cor}{Corollary}
\newtheorem*{cor*}{Corollary}
\theoremstyle{definition}
\newtheorem{defn}{Definition}
\theoremstyle{remark}
\newtheorem*{rem*}{Remark}
\newtheorem{rem}{Remark}
\newcommand{\mm}{{\mathfrak m}}
\newcommand{\name}[1]{\textsc{#1\/}}
\newcommand{\NN}{{\mathbb N}}
\newcommand{\PP}{{\mathbb P}}
\newcommand{\CC}{{\mathbb C}}
\newcommand{\QQ}{{\mathbb Q}}
\newcommand{\Cst}{{{\mathbb C}^*}}
\renewcommand{\AA}{{\mathbb A}}
\newcommand{\Aone}{{\mathbb A}^{1}}
\newcommand{\Atwo}{{\mathbb A}^{2}}
\newcommand{\VVV}{\mathcal V}
\newcommand{\V}{\mathcal V}
\newcommand{\simto}{\xrightarrow{\sim}}
\newcommand{\be}{\begin{enumerate}}
\newcommand{\ee}{\end{enumerate}}
\DeclareMathOperator{\Hom}{Hom}
\DeclareMathOperator{\Aut}{Aut}
\DeclareMathOperator{\Spec}{Spec}
\DeclareMathOperator{\Char}{char}
\DeclareMathOperator{\OOO}{\mathcal O}
\newcommand{\quot}{/\!\!/}
\DeclareMathOperator{\spec}{Spec}
\DeclareMathOperator{\Mor}{Mor}
\DeclareMathOperator{\End}{End}
\DeclareMathOperator{\pr}{pr}
\newcommand{\An}{\AA^{n}}
\newcommand{\Cdot}{\dot\CC}
\DeclareMathOperator{\mor}{Mor}
\newcommand{\Ybd}{Y_{\text{\it bd}}}
\newcommand{\Ubd}{U_{\text{\it bd}}}
\newcommand{\Yt}{{\tilde Y}}
\newcommand{\Xt}{{\tilde X}}
\newcommand{\phit}{{\tilde\phi}}
\renewcommand{\phi}{\varphi}
\newcommand{\margin}[1]{}
\newcommand{\lab}[1]{\label{#1}}
\begin{document}

\begin{abstract} We prove a {\it Generic Equivalence Theorem\/} which says that two affine morphisms $p\colon S \to Y$ and $q\colon T \to Y$ of varieties  with isomorphic (closed) fibers become isomorphic under a dominant \'etale base change $\phi\colon U \to Y$. A special case is the following result. Call a morphism $\phi\colon X \to Y$ a {\it fibration with fiber $F$\/} if $\phi$ is flat and all  fibers are (reduced and) isomorphic to $F$. Then an affine fibration with fiber $F$ admits an \'etale dominant morphism $\mu\colon U \to Y$ such that the pull-back is a trivial fiber bundle: $U\times_{Y}X \simeq U\times F$. 

As an application we give short proofs of the following two (known) results: (a) {\it Every affine $\Aone$-fibration over a normal variety is locally trivial in the Zariski-topology\/} (see \cite{KaWr1985Flat-families-of-a}); (b) {\it Every affine $\Atwo$-fibration over a smooth curve is locally trivial in the Zariski-topology\/} (see \cite{KaZa2001Families-of-affine}). 

We also study families of reductive group actions on $\Atwo$ parametrized by curves and show that every faithful action of a non-finite reductive group on $\AA^{3}$ is {\it linearizable}, i.e. $G$-isomorphic to a representation of $G$.
\end{abstract}

\maketitle

\section{Introduction and main results}
\subsection{Linearization}
Our base field is the field $\CC$ of complex numbers. For a variety $X$ we denote by $\OOO(X)$ the algebra of regular functions on $X$, i.e. the global sections of the sheef $\OOO_{X}$ of regular functions on $X$. An action of an algebraic group $G$ on $X$ is called {\it linearizable\/} if $X$ is $G$-equivariantly isomorphic to a linear representation of $G$. The ``Linearization Problem'' asks if any action of a reductive algebraic group $G$ on affine $n$-space $\AA^{n}$ is linearizable. For $n=2$ the problem has a positive answer, due to the structure of the automorphism group of  $\AA^{2}$ as an amalgamated product. On the other hand there exist non-linearizable actions on certain $\AA^{n}$ for all non-commutative connected reductive groups, see \cite{Sc1989Exotic-algebraic-g}, \cite{Kn1991Nichtlinearisierba}. The open cases are commutative reductive groups, in particular tori and commutative finite groups. For a survey on this problem we refer to the literature (\cite{Kr1996Challenging-proble}, \cite{KrSc1992Reductive-group-ac}).

A very interesting case is dimension 3 where no counterexamples have occurred so far. It is known that all actions of semisimple groups are linearizable (\cite{KrPo1985Semisimple-group-a}) as well as $\Cst$-actions (see \cite{KaKoMa1997C-actions-on-C3-ar}). The following result completes the picture of reductive group actions on $\AA^{3}$.

\begin{thm}\lab{A3linearization.thm}
Every faithful action of a non-finite reductive group on $\AA^{3}$ is linearizable.
\end{thm}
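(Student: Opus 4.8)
The plan is to reduce Theorem~\ref{A3linearization.thm} to a short list of cases and to settle the essential one by means of the structure theory for families of group actions over curves.

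First I would invoke the two results quoted above: if $G$ is semisimple, its action is linearizable by \cite{KrPo1985Semisimple-group-a}, and if $G^{\circ}\cong\Cst$, the $G^{\circ}$-action is linearizable by \cite{KaKoMa1997C-actions-on-C3-ar}. Combining these with the classification of connected reductive groups possessing a faithful representation of dimension $\le 3$, I would argue that — after replacing $\AA^{3}$ by a $G^{\circ}$-linearization — it suffices to treat $G^{\circ}\in\{\GL_{2},\ \Cst\cdot\SL_{2},\ \mathrm{SO}_{3},\ \SL_{3},\ (\Cst)^{2},\ (\Cst)^{3}\}$ and the finite extensions of these. The groups carrying a nontrivial semisimple part, whether semisimple themselves ($\mathrm{SO}_{3}$, $\SL_{3}$) or not ($\GL_{2}$, $\Cst\cdot\SL_{2}$), are disposed of by applying \cite{KrPo1985Semisimple-group-a} to the semisimple part and, when a central torus is present, lifting the linearization to it; the case $G^{\circ}=(\Cst)^{3}$ is immediate, since a faithful $(\Cst)^{3}$-action on $\AA^{3}$ has a dense orbit, so that $\AA^{3}$ is a smooth affine toric threefold and hence, up to an automorphism of the torus, the standard representation. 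The passage from $G^{\circ}$ to $G$ is in each case a standard argument: once a normal subgroup acts linearly on $\AA^{3}$, the ambient reductive group permutes the isotypic subspaces of that subgroup and can be brought into linear form.

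The substantial case is $G^{\circ}=T=(\Cst)^{2}$ acting faithfully on $\AA^{3}$ (and, in parallel, $G^{\circ}=\GL_{2}$ if one prefers not to lean on \cite{KrPo1985Semisimple-group-a} there). The idea is to manufacture from the $T$-action a $T$-equivariant morphism $\pi\colon\AA^{3}\to C$ onto a smooth affine curve $C$ — in fact a form of $\AA^{1}$ — all of whose fibers are isomorphic to $\AA^{2}$; this is exactly a family of $T$-actions on $\AA^{2}$ parametrized by the curve $C$. One builds $\pi$ by iterated reductive quotients: first $\AA^{3}\to\AA^{3}\quot S$ for a suitably chosen one-dimensional subtorus $S\subseteq T$, which — after linearizing the $S$-action by \cite{KaKoMa1997C-actions-on-C3-ar} — is an affine $\AA^{1}$-fibration over a normal surface and so is Zariski-locally trivial by \cite{KaWr1985Flat-families-of-a}; then the residual $\Cst=T/S$ collapses that surface to the curve $C$. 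Granting this, $\pi$ is an affine $\AA^{2}$-fibration over a curve, hence Zariski-locally trivial by \cite{KaZa2001Families-of-affine}; feeding this triviality into the equivariant generic-equivalence results proved above makes the family of $T$-actions constant over an \'etale cover of $C$, and after descent one gets a $T$-equivariant isomorphism $\AA^{3}\simeq\AA^{2}\times C$ over $C$. Since every reductive group action on $\AA^{2}$ is linearizable, each fiber is a $T$-representation; the family machinery forces these representations to be constant in $C$, which yields $\AA^{3}\simeq_{T}V$ for a $T$-representation $V$, and lifting to $G$ as above completes the proof.

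The step I expect to be the real obstacle is the construction and control of $\pi\colon\AA^{3}\to C$ in the torus case: one must ensure not only that the generic fiber is $\AA^{2}$, but that \emph{every} fiber is, that $C$ is honestly a smooth rational affine curve, and that the intermediate quotient maps stay flat with the expected fibers — all while tracking the residual torus action so that the final trivialization is $T$-equivariant. This is precisely the point where the analysis of families of reductive group actions on $\AA^{2}$ over curves must be combined with the $\AA^{1}$- and $\AA^{2}$-fibration theorems recovered earlier in the paper.
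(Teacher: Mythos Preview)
Your reduction misidentifies the hard case. In the paper the decisive observation is that whenever $G^{\circ}\neq\Cst$ the quotient $\AA^{3}\quot G$ has dimension $\leq 1$, and then linearization follows from Luna's slice theorem together with the one-dimensional-quotient theorem of \cite{KrSc1992Reductive-group-ac}. In particular your ``substantial case'' $G^{\circ}=(\Cst)^{2}$ is dispatched in one line, and your elaborate construction of $\pi\colon\AA^{3}\to C$ via iterated quotients is never needed.

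The genuine gap in your proposal is the sentence ``the passage from $G^{\circ}$ to $G$ is in each case a standard argument: once a normal subgroup acts linearly on $\AA^{3}$, the ambient reductive group permutes the isotypic subspaces of that subgroup and can be brought into linear form.'' This is exactly the case $G^{\circ}=\Cst$, $G\neq G^{\circ}$, and it is \emph{not} standard. After linearizing $\Cst$ with weights $(n_{1},n_{2},n_{3})$, an element $g\in G$ normalizing $\Cst$ is only an automorphism of $\AA^{3}$ compatible with the $\Cst$-action; it need not be linear and need not send weight spaces to weight spaces. (For instance, with weights $(2,1,1)$ the map $(x,y,z)\mapsto(x+y^{2},y,z)$ commutes with $\Cst$.) The paper handles this by first arranging the weights as $n_{1}\geq n_{2}\geq 0>n_{3}$ (otherwise $\dim\AA^{3}\quot G\leq 1$), then using the limit descriptions $\{z=0\}=\{v:\lim_{t\to 0}tv=0\}$ and $\{x=0\}=\{v:\lim_{t\to\infty}tv\text{ exists}\}$ to show that certain coordinate hyperplanes are genuinely $G$-stable. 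This produces a $G$-equivariant projection to $\CC_{\chi}$ (when $n_{2}>0$) or to $\CC^{2}$ (when $n_{2}=0$); in the first case one applies Lemma~\ref{linearization.lem} twice, and in the second the equivariant line-bundle result of \cite{KrKu1996Equivariant-affine}. So the family-over-a-curve machinery is indeed the engine of the proof, but it is deployed for the extension problem $\Cst\subset G$, not for the $(\Cst)^{2}$ case you singled out.
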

We do not know if the same holds for finite group actions on $\AA^{3}$. This seems to be a very difficult problem.

\subsection{Generic isotriviality}\lab{isotriviality.subsec}
One of the basic results of our paper is the following  ``generic isotriviality'' of group actions.

\begin{thm}\lab{isotriviality.thm}
Let $\phi\colon X \to Y$ be a dominant morphism where $X,Y$ are irreducible, and let $G$ be a reductive group acting on $X/Y$. Assume that the action of $G$ on the general fiber of $\phi$ is linearizable. Then there is a dominant \'etale morphism $\mu\colon U\to Y$ such that the fiber product $X \times_{Y} U$ is $G$ isomorphic to $W \times U$ over $U$ where $W$ is a linear representation of $G$:
$$
\begin{CD}
W \times U @>\simto>>  X \times_{Y} U @>>>  X \\
@VV{\pr}V   @VVV  @VV{\phi}V \\
U  @=  U  @>{\mu}>>  Y
\end{CD}
$$
\end{thm}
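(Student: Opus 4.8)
The plan is to reduce Theorem~\ref{isotriviality.thm} to the Generic Equivalence Theorem announced in the abstract, applied equivariantly. The hypothesis says that for a general point $y\in Y$ the fiber $X_y$ is $G$-isomorphic to the fixed representation $W$. So over a dense open $Y_0\subseteq Y$ we have two morphisms with isomorphic fibers as $G$-varieties: the structure morphism $\phi\colon X\to Y_0$ and the trivial family $\pr\colon W\times Y_0\to Y_0$. If we can arrange that the Generic Equivalence Theorem produces, after a dominant \'etale base change $\mu\colon U\to Y_0$, an isomorphism $X\times_Y U\simeq W\times U$ over $U$ that is moreover $G$-equivariant, we are done (shrinking $Y_0$ and reindexing $U$ as needed).

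First I would set things up so that $G$ acts on everything in sight. Since $X$ is affine over $Y$ (this is part of the setting for ``$G$ acting on $X/Y$''; if not, one replaces $X$ by a suitable relative affine model or argues that the relevant open is affine over $Y$), the family $\phi$ is determined by the quasi-coherent $\OOO_Y$-algebra $\AAA:=\phi_*\OOO_X$, which carries a $G$-action, i.e. a grading by the character group together with the module structure over $\OOO(W\times Y)=\OOO(W)\otimes\OOO(Y)$ after we note $X$ is a family of closed subvarieties of $W\times Y$: indeed, fixing a $G$-embedding of $W$ and using that each fiber embeds $G$-equivariantly into $W$, one gets a $G$-equivariant closed embedding $X\hookrightarrow W\times Y_0$ over a dense open $Y_0$, by spreading out a generic isomorphism. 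Thus $X$ and $W\times Y_0$ are two closed $G$-stable subvarieties of $W\times Y_0$ with the same fibers over $Y_0$. Now I would run the (equivariant version of the) Generic Equivalence Theorem: its proof constructs $U\to Y_0$ and the isomorphism using only natural operations — generic flatness, Noetherian induction on $Y$, passing to the \'etale locus where two finitely presented algebras with isomorphic fibers become isomorphic — and each such operation can be carried out $G$-equivariantly because $G$ is reductive (so taking $G$-invariants is exact, $G$-stable ideals and $G$-submodules are finitely generated, and the relevant Hilbert-scheme / Isom-scheme parametrizing fiberwise $G$-isomorphisms is of finite type over $Y_0$). Concretely: the functor on $Y_0$-schemes sending $T$ to $G$-isomorphisms $W\times T\simto X\times_{Y_0}T$ is represented by a locally closed subscheme $I$ of an appropriate $\Hom$-scheme; by hypothesis $I\to Y_0$ is dominant; take $U\to Y_0$ to be a dominant \'etale morphism factoring through $I$ (possible after shrinking $Y_0$, by generic smoothness of $I\to Y_0$ along a multisection, since we are in characteristic $0$). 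The tautological isomorphism over $I$ pulls back to the desired $G$-isomorphism $W\times U\simto X\times_Y U$.

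The remaining input — that the action of $G$ on the \emph{general} fiber is linearizable with a \emph{uniform} model $W$ — needs a short argument: a priori ``the general fiber is linearizable'' could a priori allow the representation $W_y$ to vary with $y$. But there are only countably many isomorphism classes of representations of $G$ of bounded dimension, and the locus of $y$ with a given $W_y$ is constructible; since $Y$ is irreducible a single such locus is dense, so $W_y\cong W$ is constant on a dense open. (This is exactly where irreducibility of $Y$ is used.) Also one checks the $G$-action on $X/Y$ has uniformly bounded fiber dimension and that $\phi$ can be taken flat with reduced fibers after shrinking $Y$, so that the fiberwise hypotheses of the Generic Equivalence Theorem apply.

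I expect the main obstacle to be the \emph{equivariance} bookkeeping in the Generic Equivalence Theorem, not the theorem itself: one must verify that every step in its proof — in particular the step that says ``two finitely presented algebras over a Noetherian base with pointwise-isomorphic fibers become isomorphic after a dominant \'etale base change'' — has a $G$-equivariant refinement, i.e. that the Isom-scheme of $G$-equivariant isomorphisms is still of finite type and dominates the base. This rests on reductivity of $G$ (finite generation of invariants, exactness of the invariant functor, so that $G$-equivariant $\Hom$ and quotient constructions behave well), and on being in characteristic $0$ so that the dominant morphism from the Isom-scheme admits an \'etale multisection. Once that is in place, pulling back the tautological family finishes the proof and produces the displayed commutative diagram.
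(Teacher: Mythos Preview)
Your overall plan coincides with the paper's: shrink $Y$ so that every fiber is $G$-isomorphic to a single representation $W$, then invoke the equivariant form of the Generic Equivalence Theorem (the paper states this as a remark immediately after the theorem). The two places where you diverge are the argument for uniformity of $W$ and your picture of how the Generic Equivalence Theorem is proved.

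For uniformity of $W$, the paper does not use your countability/constructibility trick. It argues geometrically: after shrinking, $\phi$ is smooth; each fiber, being linearizable, contains a $G$-fixed point $x_0$; the tangent $G$-module $T_{x_0}(\text{fiber})$ recovers $W_y$; a $G$-equivariant \'etale chart at $x_0$ then shows this tangent representation is locally constant along the fixed locus, hence constant over a dense open in $Y$. Your route can be made to work over $\CC$, but your assertion that the locus $\{y:X_y\cong_G W\}$ is constructible is not justified and is not literally true: the relevant Isom-functor is only represented by an ind-scheme (because $\Aut(\AA^n)$ is only an ind-group), so you get a \emph{countable union} of constructible sets, and you then need uncountability of the base field to conclude one stratum is dense.

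The more serious issue is your guess at the mechanism behind the Generic Equivalence Theorem. You expect an Isom-scheme argument and name ``the Isom-scheme of $G$-equivariant isomorphisms is still of finite type'' as the crux; but for $\AA^n$-families this scheme is \emph{not} of finite type, for the same reason as above, so that step fails as stated. The paper sidesteps this completely: it descends the whole configuration to a subfield $k_0$ finitely generated over the prime field, then uses that $\CC$ has infinite transcendence degree over $k_0$ to embed $k_0(Y_0)$ into $\CC$, thereby realizing the \emph{generic} fiber of the descended family as an honest \emph{closed} fiber of the original one. The fiberwise isomorphism at that closed point then yields, after a finite extension, an isomorphism of generic fibers, which spreads out. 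A $G$-action rides along this argument for free, so no extra equivariant bookkeeping is needed. Finally, your digression about embedding $X$ as a closed $G$-subvariety of $W\times Y_0$ is neither required nor correct (the fibers are $G$-isomorphic to $W$, not contained in it) and should be dropped.
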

As usual, the condition that ``the action of $G$ on the general fiber of $\phi$ is linearizable'' means that on an open dense subset of $Y$ all fibers $\phi^{-1}(y)$ are reduced and $G$-isomorphic to a representation of $G$. 

Theorem~\ref{isotriviality.thm} is based on a very general result, the ``Generic Equivalence Theorem'' which we formulate and prove in section~\ref{GenIso.sec}. Several special cases of this result appear in the literature, quite often in connection with so-called ``cylinder-like open sets'', but the statement seems not to be known in this general form. 

In the last paragraph we use this result to give a short and unified proof of the following results due to \name{Kambayashi-Wright} and \name{Kaliman-Zaidenberg} (see Theorem~\ref{fibration.thm}).

\begin{thm}
\be
\item If $\phi\colon X \to Y$ is a flat affine morphism with fibers $\AA^{1}$ and $Y$ normal, then $\phi$ is a fiber bundle, locally trivial in the Zariski-topology.
\item If $\phi\colon X \to Y$ is an flat affine morphism with fibers $\AA^{2}$ and  $Y$ a smooth curve, then $\phi$ is a fiber bundle, locally trivial in the Zariski-topology.
\ee
\end{thm}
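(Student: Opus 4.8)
The plan is to deduce both statements from the Generic Equivalence Theorem by first trivializing $\phi$ \'etale-locally over a dense open subset of $Y$, then descending to the Zariski topology there, and finally crossing the remaining points of $Y$ by passing to their local rings. Apply the ``fibration'' form of the Generic Equivalence Theorem to $\phi\colon X\to Y$: it is flat and affine with all fibers reduced and isomorphic to $F$ (with $F=\Aone$ in case (a) and $F=\Atwo$ in case (b)), so there is a dominant \'etale $\mu\colon U\to Y$ with $U\times_{Y}X\simeq U\times F$. Setting $Y_{0}:=\mu(U)$, the map $\mu\colon U\to Y_{0}$ is a surjective \'etale cover trivializing $\phi|_{Y_{0}}$, so $\phi|_{Y_{0}}$ is an \'etale-locally trivial $F$-bundle, a form of the trivial bundle with structure sheaf a subsheaf of $\Aut(F)$.

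Over $Y_{0}$ the descent to Zariski-local triviality is formal in case (a): $\Aut(\Aone)=\Cplus\rtimes\Cst$ is the group of maps $t\mapsto at+b$, and from the exact sequence $1\to\Cplus\to\Aut(\Aone)\to\Cst\to1$ together with Hilbert~90 (so $H^{1}_{\text{\'et}}(-,\Cst)=\operatorname{Pic}$, which is Zariski-locally trivial) the $\Cst$-part of the defining cocycle is a line bundle $L$ on $Y_{0}$ and what remains is a torsor under the quasi-coherent sheaf $L$; since \'etale and Zariski $H^{1}$ of a quasi-coherent sheaf coincide and the latter vanishes on affines, $\phi$ is Zariski-locally trivial over $Y_{0}$, with $\phi_{*}\OOO_{X}|_{Y_{0}}\cong\operatorname{Sym}(L)$. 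In case (b) I would not try to descend forms of $\Atwo$ directly; instead I would obtain Zariski-local triviality at \emph{every} point of the smooth curve $Y$ simultaneously, from the local statement in the next step, and merely observe that this is compatible with what happens over $Y_{0}$.

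It remains to extend Zariski-local triviality across $Y\setminus Y_{0}$. In case (a) this is where normality of $Y$ is used: a codimension-one component of $Y\setminus Y_{0}$ has generic point $\eta$ with $\OOO_{Y,\eta}$ a discrete valuation ring, and the restriction of $\phi$ to $\Spec\OOO_{Y,\eta}$ is an affine $\Aone$-fibration over a DVR, hence trivial (either by the structure-group argument once more, since $\operatorname{Pic}=0$ and $H^{1}=0$ over a DVR, or directly by Bass--Connell--Wright); spreading out, $\phi$ is trivial on a Zariski neighborhood of $\eta$, and a Noetherian induction together with a reflexive extension of $L$ across the remaining locus of codimension $\ge 2$ (normality again) completes case (a), recovering \cite{KaWr1985Flat-families-of-a}. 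In case (b), $Y$ is a smooth curve, so $Y\setminus Y_{0}$ is a finite set of closed points, and for each such $y$ the ring $\OOO_{Y,y}$ is a DVR containing $\CC$; by Sathaye's theorem an affine $\Atwo$-fibration over such a ring is a polynomial ring in two variables, so $\phi$ is trivial over $\Spec\OOO_{Y,y}$ and, spreading out again, over a Zariski neighborhood of $y$. Together with the triviality over $Y_{0}$ this proves case (b), recovering \cite{KaZa2001Families-of-affine}. In either case the local trivializations exhibit $\phi$ as a Zariski-locally trivial fiber bundle.

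The generic part of this argument --- \'etale-local triviality from the Generic Equivalence Theorem, followed by the elementary cohomology of quasi-coherent sheaves --- is painless; the main obstacle is control of $\phi$ over the local ring at a ``bad'' point. For $\Aone$ this is elementary, and the only real work in case (a) beyond the Generic Equivalence Theorem is the normality argument needed to cross the codimension-$\ge 2$ locus; for $\Atwo$ the corresponding local statement over a DVR is the genuinely deep input (Sathaye), so in case (b) the Generic Equivalence Theorem is what makes everything away from the finitely many bad points transparent, but it does not replace that local result.
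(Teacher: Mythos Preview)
Your strategy for part (b) is essentially the paper's: use generic isotriviality to handle a dense open, then apply \name{Sathaye}'s theorem over the DVR at each remaining closed point. One omission: to invoke Sathaye at $y$ you need the generic fiber over $K=\CC(Y)$ to already be $K^{[2]}$, not just a $K$-form of $\AA^{2}$; your \'etale trivialization only gives the latter, and you must still cite \name{Kambayashi}'s result (the paper's Proposition on absence of forms) to pass from one to the other. Your sentence ``I would not try to descend forms of $\Atwo$ directly'' is misleading here---you cannot avoid this step.

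For part (a), your argument and the paper's genuinely diverge at the last stage, and your version is incomplete. Both arguments reach the point where $\phi$ is Zariski-locally trivial over an open $Y_{bd}$ with $\operatorname{codim}(Y\setminus Y_{bd})\geq 2$. You then propose to cross the remaining locus by ``Noetherian induction together with a reflexive extension of $L$.'' But the reflexive pushforward $j_{*}L$ of the line bundle on $Y_{bd}$ need not be locally free when $Y$ is merely normal (not locally factorial), and you do not explain how knowing $j_{*}L$ is reflexive yields local triviality of $\phi$ at a point of $Y\setminus Y_{bd}$. One can in fact push this through---working over $\OOO_{Y,\eta}$ for $\eta$ generic in a component of $Y\setminus Y_{bd}$, one shows $\OOO(X_{\eta})$ is the divisorial symmetric algebra $\bigoplus_{n}M^{[n]}$ of a rank-one reflexive $M$, and then uses the hypothesis that the fiber over $\eta$ is $\AA^{1}_{\kappa(\eta)}$ (itself requiring the no-forms result applied to $\phi^{-1}(\overline{\{\eta\}})\to\overline{\{\eta\}}$) to force $\dim_{\kappa(\eta)}M/\mm_{\eta}M=1$, hence $M$ free by Nakayama---but none of this is in your proposal, and it is not a routine ``reflexive extension'' argument.

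The paper sidesteps this entirely by a different device: pull back along the smooth surjection $\psi\colon X\times_{Y}X\setminus\Delta\to Y$, which equips the pulled-back fibration with two disjoint sections; a short lemma shows that an $\Aone$-bundle with two disjoint sections is \emph{globally} trivial, so over any affine $U$ upstairs the bundle is trivial on $U_{bd}$ and then Hartogs (normality, $\operatorname{codim}\geq 2$) extends this global trivialization to all of $U$. Finally one descends from smooth-local to \'etale-local to Zariski-local triviality using that $\Aut(\Aone)$ is special. The virtue of this route is that extending a \emph{global} isomorphism across $\operatorname{codim}\geq 2$ is immediate, whereas extending \emph{local} triviality---what you attempt---is not.
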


\subsection{Families of group actions}
An important concept and basic tool in our paper are {\it families\/} of automorphisms and  {\it families\/} of group actions.

\begin{defn} 
Let $Z,Y$ be varieties. A {\it family of automorphisms of $Z$ parametrized by $Y$} is an automorphism $\Phi$ of $Z \times Y$ such that the  the projection $\pr\colon Z\times Y \to Y$ is invariant. We use the notation $\Phi = (\Phi_{y})_{y\in Y}$ where $\Phi_{y}$ is the induced automorphism of the fiber $Z\times\{y\}$ which we identify with $Z$. 

Similarly, for an algebraic group $G$, a {\it family of $G$-actions on $Z$ parametrized by $Y$\/} is a $G$-action $\Phi$ on $Z\times Y$ such that the projection $\pr\colon Z\times Y \to Y$ is $G$-invariant. Again we use the notation $\Phi = (\Phi_{y})_{y\in Y}$ where $\Phi_{y}$ is the $G$-action on the fiber $Z\times\{y\}$ identified with $Z$.
\end{defn}
\par\medskip
Using an equivariant form of \name{Sathaye}'s famous Theorem (see Lemma~\ref{linearization.lem}) we obtain the following result about linearization of families of two dimensional representations.

\begin{thm}\lab{linearization.thm}
Let $G$ be a reductive group, and let $\Phi$ be a family of $G$-actions  on $\AA^{2}$ parametrized by a 
factorial affine curve $C$. Then the family is simultaneously linearizable, i.e., $\AA^{2}\times C$ is $G$-isomorphic to $V\times C$ where $V$ is a two-dimensional linear representation of $G$.
\end{thm}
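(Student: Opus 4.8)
The plan is to bootstrap from the general Generic Equivalence Theorem (equivalently Theorem~\ref{isotriviality.thm}) and then upgrade the étale base change to a trivialization over all of $C$ by a descent-type argument using the factoriality of $C$.

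First I would apply Theorem~\ref{isotriviality.thm} to $\phi = \pr\colon \AA^{2}\times C \to C$, which is a flat affine morphism all of whose fibers are reduced and $G$-isomorphic to a fixed two-dimensional representation. (A small preliminary point: we should check that the representation type is constant along $C$; since $C$ is irreducible, the character of $G$ on the fiber is locally constant, hence constant, so there is a single $V$ that works generically.) Hence there is a dominant étale morphism $\mu\colon U \to C$ and a $G$-isomorphism $U\times_{C}(\AA^{2}\times C) \simeq V\times U$ over $U$. Equivalently, there is a $G$-equivariant automorphism $\Psi$ of $\AA^{2}\times U$ over $U$ carrying the pulled-back family to the constant family $V\times U$. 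The task is to manufacture such a $\Psi$ already over $C$.

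The key step will be to use the equivariant Sathaye lemma (Lemma~\ref{linearization.lem}) more directly together with a \emph{rationality} argument over the function field $\CC(C)$. Concretely: base-change the family $\Phi$ to $\AA^{2}_{\CC(C)} = \Spec \CC(C)[x,y]$; the generic fiber is a form of $V$ over $\CC(C)$, and by the generic linearizability (or by Lemma~\ref{linearization.lem} applied over the field $\CC(C)$, which is not algebraically closed but the relevant Sathaye-type statement should still apply for the plane) this form is trivial, i.e. there is a $G$-equivariant $\CC(C)$-algebra isomorphism $\CC(C)[x,y]\simeq \CC(C)\otimes_{\CC}\OOO(V)$. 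Spreading this isomorphism out, it is defined over some nonempty open $C_{0}\subseteq C$, giving a $G$-trivialization of $\Phi$ over $C_{0}$. The remaining problem is to extend across the finitely many missing points of $C\setminus C_{0}$. At each such point $c$ one has two $G$-trivializations over the punctured neighborhood (the spread-out one and, via Theorem~\ref{isotriviality.thm} pulled back, one coming from the local ring), and the obstruction to gluing lives in a pointed set of $G$-equivariant automorphisms of $\AA^{2}$ over the local ring $\OOO_{C,c}$ — an ``equivariant patching'' problem. Here one invokes the structure of $\Aut^{G}(\AA^{2})$ (the $G$-equivariant automorphisms of the plane), exactly as in the proof that $\AA^{2}$-actions are linearizable, to show the gluing datum is a coboundary; factoriality of $C$ (so that $\OOO(C)$ is a UFD and line bundles on $C$ are trivial) is what makes the relevant $\OOO(C)^{*}$- and $\Pic$-obstructions vanish.

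The main obstacle I expect is precisely this last gluing/patching step: reducing from the generic (or étale-local) $G$-trivialization to a global one over the affine curve. One must control the group of $G$-equivariant automorphisms of $\AA^{2}$ in families — using the amalgamated-product structure of $\Aut(\AA^{2})$ in its equivariant refinement — and show that an automorphism over the punctured disc that becomes trivial on both the formal/étale neighborhood and the generic point can be split as a product of one extending over the point and one trivial there. Equivalently, one shows the presheaf $c\mapsto \Aut^{G}(\AA^{2})(\OOO_{C,c})$ satisfies enough of a Hartogs/gluing property on the factorial curve $C$. Once this is in place, patching the generic trivialization with the local ones yields a global $G$-isomorphism $\AA^{2}\times C \simeq V\times C$, which is the assertion of Theorem~\ref{linearization.thm}.
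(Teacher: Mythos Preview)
Your overall two-stage strategy (trivialize generically, then extend across the finitely many missing points) matches the paper's, but the execution of both stages is off, and the second one has a genuine gap.

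For the generic step, you invoke Lemma~\ref{linearization.lem} ``over the field $\CC(C)$'' and call it a ``Sathaye-type statement.'' That is not what Lemma~\ref{linearization.lem} is: it is the \emph{extension} lemma across a single smooth point (essentially over a DVR), not a function-field linearization result. The function-field step is Lemma~\ref{opentrivial.lem}, and it rests on the amalgamated product structure of $\Aut(K[x,y])$ (van der Kulk/Kambayashi) together with the fact that representations of $G_K$ are already defined over $\CC$; Sathaye's theorem plays no role here. Your detour through Theorem~\ref{isotriviality.thm} is unnecessary: Lemma~\ref{opentrivial.lem} already gives Zariski-local (not just \'etale-local) triviality over an open $C_t\subset C$.

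The real gap is the extension step. You propose to glue by analyzing a pointed set of $G$-equivariant automorphisms of $\AA^2$ over the local ring and to show the obstruction is a coboundary, appealing vaguely to the amalgamated product structure and to factoriality killing $\Pic$- and unit-obstructions. This is not carried out, and it is not clear it can be: $\Aut^G(\AA^2)$ is an ind-group, not a linear algebraic group, so there is no ready-made $H^1$-vanishing to invoke, and an honest patching argument would have to control arbitrarily high-degree equivariant automorphisms over $\OOO_{C,c}$. The paper avoids this entirely. It uses factoriality to write $C=C_t\cup\{c_1,\dots,c_k\}$ and proceeds by induction, adding one point at a time; at each step one is exactly in the hypotheses of Lemma~\ref{linearization.lem} (trivial on $C'$, a generator $t$ of $\mm_{c_0}$ spanning a $G$-stable line), and that lemma produces a global trivialization by a concrete algorithm: adjust the generic coordinates $x_1,y_1$ so that their reductions mod $t$ are linearly independent, detect their algebraic dependence in $\bar R$, use Sathaye's variable criterion to replace them by new $G$-semi-invariant coordinates $x_2,y_2$ with $\bar{x_2}=0$, divide by powers of $t$, and show a nonnegative integer $N(x_1,y_1)$ strictly decreases. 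In short, you have the right lemma in hand but apply it to the wrong stage; it \emph{is} the extension mechanism, and invoking it there replaces your unproved patching sketch.
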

This has the following consequence. Recall that a {\it variable\/} of $\AA^{n}$ is a regular function $f$ on $\AA^{n}$ which appears in an algebraic independent system of generators of the polynomial ring $\OOO(\AA^{n})$.

\begin{cor}\lab{variable.cor}
A reductive groups action on $\AA^{3}$ fixing a variable is linearizable.
\end{cor}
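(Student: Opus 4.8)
The plan is to reduce the statement directly to Theorem~\ref{linearization.thm}. Suppose $G$ is reductive and acts on $\AA^{3}$ fixing a variable $f$; thus $f\in\OOO(\AA^{3})^{G}$ and $f$ is part of a system of coordinates $(f,f_{2},f_{3})$ with $\OOO(\AA^{3})=\CC[f,f_{2},f_{3}]$. First I would use this coordinate system to identify $\AA^{3}$ with $\AA^{2}\times\Aone$ in such a way that $f$ becomes the projection $\pr\colon\AA^{2}\times\Aone\to\Aone$ onto the second factor. Since $f$ is $G$-invariant, $G$ acts trivially on $\Aone=\Spec\CC[f]$, so the $G$-action on $\AA^{2}\times\Aone$ leaves $\pr$ invariant; in other words, it is a family of $G$-actions on $\AA^{2}$ parametrized by the curve $C:=\Aone$ in the sense of the definition above.

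Next, since $\Aone$ is a factorial affine curve, Theorem~\ref{linearization.thm} applies and yields a $G$-isomorphism $\AA^{2}\times\Aone\simeq V\times\Aone$ over $\Aone$, where $V$ is a two-dimensional representation of $G$ and $G$ acts linearly on $V$ and trivially on the $\Aone$-factor. Composing with the identification from the first step gives a $G$-isomorphism $\AA^{3}\simeq V\times\Aone$ for this action. But $V\times\Aone$, equipped with the action that is linear on $V$ and trivial on $\Aone$, is precisely the representation $W:=V\oplus\CC$ of $G$ (trivial action on the summand $\CC$), regarded as an affine variety. Hence $\AA^{3}$ is $G$-isomorphic to the representation $W$, i.e.\ the action is linearizable.

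I do not expect a genuine obstacle here; the substance is entirely in Theorem~\ref{linearization.thm}. The only points that need care are (i) that a variable really does trivialize the associated $\Aone$-fibration $f\colon\AA^{3}\to\Aone$ — immediate from the definition of ``variable'' — and (ii) that the $G$-isomorphism produced by Theorem~\ref{linearization.thm} is an isomorphism of families, i.e.\ compatible with the projections to $C$, so that the trivial action on the curve factor is retained and the target is recognized as an honest linear representation of $G$.
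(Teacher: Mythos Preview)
Your argument is correct and is exactly the deduction the paper has in mind: the corollary is stated immediately after Theorem~\ref{linearization.thm} as a direct consequence, with no separate proof, and your write-up simply makes explicit the obvious identification of $\AA^{3}$ with a family of $G$-actions on $\AA^{2}$ over the factorial curve $C=\Aone$. The two care points you flag are handled just as you say: the definition of a variable gives the trivialization, and the isomorphism in Theorem~\ref{linearization.thm} is over $C$ with trivial $G$-action on $C$, so the target is the linear representation $V\oplus\CC$.
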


We conjecture that this holds in the more general situation where the reductive group action on $\AA^{3}$ {\it normalizes} a variable.

\subsection{Ind-varieties and ind-groups}
In order to explain the next application, let us recall that the group $\Aut(\AA^{n})$ of polynomial automorphisms of affine $n$-space has the structure of an {\it ind-group} (see \cite{FuMa2010A-characterization} or \cite{Ku2002Kac-Moody-groups-t}; this notion goes back to Shafarevich who called this objects {\it infinite dimensional varieties\/} or {\it groups}, see \cite{Sh1966On-some-infinite-d,Sh1981On-some-infinite-d,Sh1995Letter-to-the-edit}). 

\begin{defn} 
An {\it ind-variety\/} $\V$ is a set together with subsets $\V_{1}\subset \V_{2}\subset \V_{3}\subset \cdots$ such that the following holds:
\be
\item $\V = \bigcup_{k}\V_{k}$;
\item Each $\V_{k}$ has the structure of a variety;
\item $\V_{k}\subset \V_{k+1}$ is closed in the Zariski-topology for all $k$.
\ee
An ind-variety $\V$ has a natural topology where $S\subset \V$ is open (resp. closed or locally closed) if and only if $S_{n}:=S\cap \V_{k} \subset \V_{k}$ is open (resp. closed or locally closed). Obviously, a locally closed subset $S \subset \V$ has a natural structure of an ind-variety. An ind-variety $\V$ is called {\it affine\/} if all $\V_{k}$ are affine. It is also clear how to define {\it morphisms\/} and {\it isomorphisms\/} of ind-varieties as well as {\it ind-groups}.
\end{defn}
Basic objects are $\CC$-vector spaces $V$ of countable dimension which can be given the structure of an (affine) ind-variety by choosing an increasing sequence of finite dimensional subspaces $V_{k}$ such that $V = \bigcup_{k} V_{k}$. The structure is independent of the choice of this sequence in the sense that for any two such choices the identity map is an isomorphism. For example, if $X$ is an affine variety and $W$ a finite dimensional vector space, then $\Mor(X,W) = \OOO(X)\otimes W$ is an ind-variety. Choosing a closed embedding $X\subset W$ one easily sees that $\End(X) = \Mor(X,X)\subset \Mor(X,W)$ is closed, so that 
$\End(X)$ is an (affine) ind-variety where the structure does not depend on the embedding $X \subset W$.

An important case is $\End(\AA^{n})=\CC[x_{1},\cdots,x_{n}]^{n}$ where the ind-structure is usually given by $\End(\AA^{n}]_{k}:= (\CC[x_{1},\cdots,x_{n}]_{\leq n})^{k}$, the endomorphisms of degree $\leq k$ where the degree of  $\phi=(f_{1},\ldots,f_{n})$ is defined to be $\deg\phi:= \max(\deg f_{i})$. One can show that
$\Aut(\AA^{n}) \subset  \End(\AA^{n})$ is locally closed, i.e. the automorphisms $\Aut(\AA^{n})_{k}$ of degree $\leq k$ are locally closed in $\End(\AA^{n})_{k}$. Moreover, multiplication and inverse are morphisms of ind-varieties so that $\Aut(\AA^{n})$ is indeed an ind-group. (For the inverse one has to use the formula $\deg\phi^{-1}\leq(\deg\phi)^{n-1}$ due to \name{Offer Gabber}, see \cite{BaCoWr1982The-Jacobian-conje}.) 

Using this structure it is easy to see that a family $\Phi=(\Phi_{y})_{y\in Y}$ of automorphisms of $\AA^{n}$ parametrized by $Y$ defines a morphism $\tilde\Phi\colon Y\to \Aut(\AA^{n})$, $y\mapsto\Phi_{y}$, and vice versa. Similarly, a family of group  actions of a reductive group $G$ parametrized by $Y$ is the same as a morphism $Y \to \mor(G,\Aut(\AA^{n}))$ such that the image belongs to $\Hom(G,\Aut(\AA^{n}))$ where $\mor(G,\Aut(\AA^{n})) = \bigcup_{k}\mor(G,\Aut(\AA^{n})_{k})$ also has a natural structure of an ind-variety.

\bigskip
\section{Generic equivalence and generic isotriviality}\lab{GenIso.sec}

Our first result concerns the generic equivalence of two morphisms having the same fibers. This holds under very general conditions. The main ingredient is the following lemma which should be well-known. Let $p\colon X \to Y$ be a dominant morphism between affine $k$-varieties where $k$ is algebraically closed and  $Y$ irreducible. Then there is a field $k_{0}\subset k$ which is finitely generated over the prime field and a morphism $p_{0}\colon X_{0}\to Y_{0}$ of affine $k_{0}$-varieties with a cartesian diagram
$$
\begin{CD}
X @>p>> Y @>>> \Spec k\\
@VVV @VVV  @VVV\\
X_{0} @>p_{0}>> Y_{0} @>>> \Spec k_{0}\\
\end{CD} 
$$

\begin{lem}\lab{basic.lem}
In the notation above denote by $\omega\colon\Spec K_{0}\to Y_{0}$ the generic point of $Y_{0}$ and by $(X_{0})_{\omega}:=p_{0}^{-1}(\omega)$ the generic fiber of $p_{0}$. Then every $k_{0}$-embedding $K_{0}\hookrightarrow k$ defines a closed point $y\in Y$ and an isomorphism
$$
(X_{0})_{\omega} \times_{\Spec K_{0}}\Spec k \simto X_{y}:=p^{-1}(y).
$$
\end{lem}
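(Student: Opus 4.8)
The plan is to spread out the situation over a finitely generated subring of $k$, trace through the base-change diagrams, and identify the generic fiber of $p_0$ after extension of scalars with the fiber of $p$ over a suitable closed point. Since $k$ is algebraically closed of large transcendence degree, it will contain any finitely generated extension of $k_0$, and it is this freedom in choosing $k_0$-embeddings $K_0 \hookrightarrow k$ that produces the closed points $y \in Y$.

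First I would set up notation precisely. Write $Y_0 = \Spec A_0$ with $A_0$ a finitely generated $k_0$-domain and $X_0 = \Spec B_0$ with $B_0$ a finitely generated $A_0$-algebra, chosen so that $Y = Y_0 \times_{\Spec k_0} \Spec k$, $X = X_0 \times_{\Spec k_0}\Spec k$, and $p = p_0 \times \id$; the hypotheses on $p$ guarantee such a descent exists. Let $K_0 = \operatorname{Frac}(A_0)$, so $\omega\colon \Spec K_0 \to Y_0$ is the generic point, and $(X_0)_\omega = \Spec(B_0 \otimes_{A_0} K_0)$.

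Next, given a $k_0$-embedding $\iota\colon K_0 \hookrightarrow k$, I would observe that restricting $\iota$ to $A_0$ gives a $k_0$-algebra homomorphism $A_0 \to k$, i.e.\ a $k$-point of $Y_0$, which under the identification $Y = Y_0 \times_{\Spec k_0}\Spec k$ corresponds to a closed point $y \in Y$ (a $k$-rational point, hence closed since $k$ is algebraically closed and $Y$ is of finite type over $k$). The fiber $X_y = p^{-1}(y)$ is then $\Spec(B_0 \otimes_{A_0,\,\iota} k)$, where $A_0$ acts on $k$ through $\iota|_{A_0}$. On the other hand, the left-hand side of the claimed isomorphism is $\Spec\bigl((B_0 \otimes_{A_0} K_0) \otimes_{K_0,\,\iota} k\bigr)$. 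The core of the argument is then the ring isomorphism
$$
(B_0 \otimes_{A_0} K_0) \otimes_{K_0} k \;\simeq\; B_0 \otimes_{A_0} k,
$$
which is immediate from associativity of tensor product together with the fact that the map $A_0 \to k$ factors as $A_0 \to K_0 \xrightarrow{\iota} k$; here it is essential that the localization $A_0 \to K_0$ is injective (so the composite is the restriction of $\iota$) and that $K_0 \to k$ is a ring map extending it. Unwinding Spec's gives the desired isomorphism over $\Spec k$.

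The only genuine subtlety — and the step I expect to need the most care — is the descent step: justifying that $p\colon X \to Y$ really does arise by base change from some $p_0\colon X_0 \to Y_0$ over a finitely generated $k_0 \subset k$. This is the standard ``spreading out'' argument (cf.\ EGA IV, \S8, or \cite{BaCoWr1982The-Jacobian-conje} in spirit): one writes down finitely many generators and relations for the coordinate rings of $X$ and $Y$ and for the comorphism $p^\#$, and lets $k_0$ be the subfield of $k$ generated over the prime field by all the coefficients that appear. Once this is in place the rest is the formal tensor-product manipulation above, and the passage from ``$k_0$-embedding $K_0 \hookrightarrow k$'' to ``closed point of $Y$'' is routine because every such embedding exists precisely when $\operatorname{trdeg}_{k_0} k$ is large enough, which holds as $k$ is a fixed large algebraically closed field.
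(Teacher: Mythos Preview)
Your proof is correct and is essentially the algebraic translation of the paper's diagrammatic argument: the paper shows the outer square is cartesian and reads off $(X_{0})_{\omega}\times_{\Spec K_{0}}\Spec k \simeq X_{0}\times_{Y_{0}}\Spec k \simeq X\times_{Y}\Spec k = X_{y}$, which is exactly your tensor-product identity $(B_{0}\otimes_{A_{0}}K_{0})\otimes_{K_{0}}k \simeq B_{0}\otimes_{A_{0}}k$ read backwards. One small point of presentation: the spreading-out step you worry about in your final paragraph is not part of this lemma at all---the existence of $p_{0}\colon X_{0}\to Y_{0}$ over a finitely generated $k_{0}$ is stated in the setup \emph{preceding} the lemma and is simply a hypothesis here, so that paragraph can be dropped.
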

\begin{proof} The $k_{0}$-embedding $\OOO(Y_{0}) \hookrightarrow K_{0}\hookrightarrow k$ defines a $k$-homomorphism $\OOO(Y)=\OOO(Y_{0})\otimes_{k_{0}}k \to k$, hence a closed point $\iota_{y}\colon \{y\}\to Y$, and we obtain the following commutative diagram where $X_{y}=p^{-1} (y)$ is the (schematic) fiber of $y$.

\begin{diagram}
X_{y} &\rTo(8,0) &&&&&(X_{0})_{\omega}\\
\dTo(0,8)&\rdTo &&&& \ldTo&\dTo(0,8)\\
&& X &\rTo& X_{0}\\
&&\dTo_{p}&&\dTo_{p_{0}}\\
&&Y&\rTo&Y_{0}\\
&\ruTo_{\iota_{y}} &\dTo(0,4)&&\dTo(0,4)& \luTo_{\omega}\\
\Spec k & \rTo(8,0) &&&&& \Spec K_{0}\\
&\rdTo_{=} &&&& \ldTo\\
&&\Spec k &\rTo & \Spec k_{0}\\
\end{diagram}
It follows that the outer diagram is cartesian:
$$
(X_{0})_{\omega}\times_{\Spec K_{0}}\Spec k \simeq X_{0}\times_{Y_{0}}\Spec k \simeq X \times_{Y}\Spec k = X_{y}.
$$
\end{proof}

\begin{thmgenequiv}\label{genequiv.thm} 
Let $k$ be an algebraically closed field of infinite transcendence degree 
over the prime field. Let $p\colon S\to Y$ and $q\colon T\to Y$ be two affine morphisms where $S,T$ and $Y$ are $k$-varieties. Assume that for all $y\in Y$ the two (schematic) fibers $S_y:=p^{-1}(y)$ and $T_y:=q^{-1}(y)$ are isomorphic. Then there is a dominant \'etale morphism $\phi\colon U\to Y$ and an
isomorphism $S\times_Y U \simeq T\times_Y U$ over $U$:
\begin{diagram}
S & \lTo &  S\times_Y U && \rTo^{\simeq}&& T\times_Y U &\rTo & T\\
\dTo &&& \rdTo && \ldTo &&& \dTo \\
Y && \lTo^{\phi} && U && \rTo^{\phi} && Y
\end{diagram}
\end{thmgenequiv}
\begin{rem}\lab{Gequiv.rem}
Under the assumptions of the proposition assume in addition
that an algebraic group $G$ acts on $S$ and $T$ such that $p$ and $q$ are both
invariant  and that the isomorphisms
$\phi_y\colon S_y \simto T_y$ can be chosen to be  $G$-equivariant. Then the
proposition holds  $G$-equivariantly, i.e., there is an \'etale 
morphism $U \to Y$ and a $G$-equivariant isomorphism $S\times_Y U \simeq
T\times_Y U$.
\end{rem}
\begin{rem}
We do not know if the Theorem holds for all algebraically closed fields, e.g. for $\bar\QQ$.
\end{rem}
\begin{proof} We can assume that $Y$ is affine and irreducible. 
Clearly, the whole setting is defined over a field $k_0$ which is
finitely generated over the prime field. This means that there are $k_0$-varieties
$Y_0,S_0,T_0$ and morphisms $p_0\colon S_0 \to Y_0$, $q_0\colon T_0
\to Y_0$ which become $p\colon S \to Y$, $q\colon T \to Y$ under the base
change $k/k_0$, i.e., the following diagrams are cartesian:
\begin{equation*}  
\begin{CD} 
Y_0@>>>\spec k_0@.\qquad\qquad S_0 @>p_0>> Y_0 @<q_0<<T_0\\ 
@AAA @AAA\qquad\qquad@AAA@AAA@AAA\\ 
Y @>>> \spec k @.\qquad\qquad S @>p>> Y @<q<<T
\end{CD}  
\end{equation*}
Let $\omega\colon \Spec K_{0} \to Y_0$ be the generic point of $Y_{0}$. By assumption on the field $k$ we can
embed $K_0$ into $k$ (over $k_0$). According to Lemma~\ref{basic.lem} we get a closed point $\iota\colon\{y\} \to Y$ and isomorphisms
\begin{equation}
{S_0}_\omega\times_{\spec K_0} \spec k \simeq S_y  \simeq T_{y}\simeq {T_0}_\omega\times_{\spec K_0} \spec k.
\end{equation}
This implies that there is a finite field extension $L_0/K_0$ and an isomorphism
$$
{S_0}_\omega \times_{\spec K_0}\spec L_0 \simeq
{T_0}_\omega \times_{\spec K_0}\spec L_0.
$$
In fact, in $(1)$ we can first replace $k$ by a finitely generated $K_{0}$-algebra $A$ and then pass to 
$L_{0}:= A/\mm$ where $\mm\subset A$ is a maximal ideal.

It follows that there is a finite field extension $L$ of $K=k(Y)$, the field of rational functions 
on $Y$, and an isomorphism
$$
{S}_\omega \times_{\spec K}\spec L \simeq
{T}_\omega \times_{\spec K}\spec L.
$$
where again $S_\omega$ and $T_\omega$ denote the generic fibers of $p$
and $q$ (over $\spec K$). Since ${S}_\omega \times_{\spec K}\spec L =
S\times_Y\spec L$ there is a variety $X$  and
a dominant morphism $X \to Y$ of finite degree $[L:K]$ such that $S\times_Y X
\simeq T\times_Y X$.
\end{proof}

Using the equivariant form of this result (see Remark~\ref{Gequiv.rem} above) we can now prove Theorem~\ref{isotriviality.thm}.
\begin{proof}[Proof of Theorem~\ref{isotriviality.thm}]
The assumptions of the theorem imply that there is an open dense set $U\subset X$ with the following properties:
\be
\item $U$ is smooth;
\item The fibers $\phi^{-1}(u)$ for $u\in U$ are reduced and isomorphic to $\CC^{n}$ where $n:=\dim Y - \dim X$;
\item The action of $G$ on a fiber $\phi^{-1}(u)$ for $u\in U$ is linearizable.
\ee
To finish the proof using the Equivariant Generic Equivalence Theorem (Remark~\ref{Gequiv.rem}) we have to show the following:
\be
\item[(d)] For all $u\in U$ the fiber $\phi^{-1}(u)$ is $G$-isomorphic to a fixed representation $W$ of $G$.
\ee
In fact, $\phi\colon\phi^{-1}(U)\to U$ is smooth and surjective and  the tangent space $T_{x_{0}}Y$ in a fixed point $x_{0}\in\phi^{-1}(U)^{G}$ has a $G$-stable decomposition $T_{x_{0}}Y = T_{x_{0}}F_{0}\oplus V$ where $F_{0}:= \phi^{-1}(\phi(x_{0}))$, and $d\phi_{x_{0}}\colon V \simto T_{\phi(x_{0})}X$, since $G$ is reductive. Moreover, there is a $G$-equivariant morphism $\mu\colon T_{x}F_{0}\oplus V \to Y$ sending $(0,0)$ to $x_{0}$ which is \'etale in a neighbourhood of $(0,0)$. This implies that for all fixed points $x$ in a neighbourhood of $x_{0}$ the tangent representation $T_{x}F$, $F := \phi^{-1}(\phi(x))$, is isomorphic to $T_{x_{0}}F_{0}$. Thus all fibers in a neighbourhood of $\phi(x_{0})$ are $G$-isomorphic to the same representation.
\end{proof}

\bigskip
\section{Families of group actions on $\AA^{2}$}\lab{families.sec}

We start with a crucial result on families of group actions on $\AA^{2}$ where we use in essential way the amalgamated product structure of $\Aut(\AA^{2})$. We do not know how to generalize this to higher dimension.

\begin{lem}\lab{opentrivial.lem}
Let $G$ be a reductive group and let  $\Phi$ be a family of $G$-actions on $\AA^{2}$ parametrized by $Y$ where $Y$ is an irreducible affine variety. Then there is an open dense set $U \subset Y$ such that the family $\Phi|_{U}$ is equivalent to the constant family of a (2-dimensional) linear representation $V$ of $G$, i.e., there is a $G$-equivariant isomorphism of $\AA^{2}\times U$ with $V\times U$.
\end{lem}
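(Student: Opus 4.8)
The plan is to pass to the generic point of $Y$, linearize the action there over the function field $K:=\CC(Y)$ by exploiting the amalgamated product structure of $\Aut(\AA^{2})$ (which is valid over an arbitrary field), and then spread the resulting equivariant isomorphism out to an open dense subset $U\subseteq Y$.

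First I would recall that the family $\Phi$ amounts to a morphism $\tilde\Phi\colon Y\to\Hom(G,\Aut(\AA^{2}))$, and restrict all the data to the generic point $\eta=\Spec K$ of the irreducible affine variety $Y$. This produces a reductive group $G_{K}:=G\times_{\CC}\Spec K$ together with an action of $G_{K}$ on $\AA^{2}_{K}$; since $\Char\CC=0$ the group $G_{K}$ is linearly reductive.

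The heart of the matter --- and the step I expect to be the main obstacle --- is linearizing this $G_{K}$-action over the non-algebraically closed field $K$. Here one invokes the Jung--van der Kulk theorem, valid over any field: $\Aut_{K}(\AA^{2}_{K})$ is the amalgamated product $\mathrm{Aff}_{2}(K)\ast_{D}\mathrm{dJ}_{2}(K)$ of the affine group and the de Jonqui\`eres (triangular) group along their intersection $D$. Since a reductive group acts on the associated Bass--Serre tree with a fixed point, $G_{K}$ is conjugate in $\Aut_{K}(\AA^{2}_{K})$ into one of the two factors. A reductive subgroup of the affine group $\mathrm{Aff}_{2}(K)=\GL_{2}(K)\ltimes K^{2}$ is conjugate into $\GL_{2}(K)$: projecting to $\GL_{2}$ one gets a linear action $\bar\rho$, the deviation of $\rho$ from $\bar\rho$ is a cocycle in $Z^{1}(G_{K},K^{2})$, and $H^{1}(G_{K},K^{2})=0$ by linear reductivity, so the translation part can be removed. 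Similarly a reductive subgroup of $\mathrm{dJ}_{2}(K)$ --- an extension of a two-dimensional torus by a pro-unipotent group whose relevant first cohomology vanishes --- is conjugate into its torus, hence into $\GL_{2}(K)$. Either way the generic-fiber action is $G_{K}$-equivariantly isomorphic to a linear action, i.e.\ to a two-dimensional representation $W$ of $G_{K}$ over $K$. It remains to replace $W$ by $V\otimes_{\CC}K$ with $V$ a representation of $G$ over $\CC$: after shrinking $Y$ the homomorphism $G_{K}\to\GL_{2}(K)$ extends to a morphism $Y\to\Hom(G,\GL_{2})$; since $H^{1}(G,\gl_{2})=0$ the $\GL_{2}$-conjugation orbits in $\Hom(G,\GL_{2})$ are open, hence also closed, so the irreducible $Y$ lands in a single orbit $\GL_{2}/Z$; the centralizer $Z$ of a two-dimensional representation is a product of general linear groups and therefore special, so the pullback of the torsor $\GL_{2}\to\GL_{2}/Z$ to $Y$ is Zariski-locally trivial, and after a further shrinking we may conjugate the family of representations to the constant family $V\times Y$.

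Finally I would spread out. The $G_{K}$-equivariant isomorphism $\AA^{2}_{K}\simto V\otimes_{\CC}K$ and its inverse are given by finitely many polynomials with coefficients in $K=\CC(Y)$, hence are defined over $\OOO(U)$ for a suitable open dense $U\subseteq Y$, on which they remain mutually inverse; $G$-equivariance is a closed condition that holds at $\eta$, hence on a dense open set, which we absorb into $U$. The resulting $G$-equivariant isomorphism $\AA^{2}\times U\simto V\times U$ over $U$ identifies $\Phi|_{U}$ with the constant family of the representation $V$, as required. The only genuinely delicate point is the field linearization in the previous paragraph --- deciding which amalgam factor $G_{K}$ falls into, discharging the cohomological obstructions over $K$, and descending the linearizing representation to $\CC$; everything else is routine spreading-out. (Alternatively one could reach the conclusion over an \'etale cover via the equivariant form of the Generic Equivalence Theorem, Remark~\ref{Gequiv.rem}, and then descend using that $Z$ is special, but the generic-point argument above is more direct.)
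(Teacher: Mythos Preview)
Your approach matches the paper's: pass to the function field $K=\CC(Y)$, linearize the $G_{K}$-action on $\AA^{2}_{K}$ using the amalgamated product structure of $\Aut_{K}(\AA^{2}_{K})$, then clear denominators to spread out to an open $U=Y_{t}$. Where you spell out the Bass--Serre/cohomology argument, the paper simply cites van der Kulk and Kambayashi for linearizability of reductive actions on $\AA^{2}$ over any field of characteristic zero. The one substantive difference is the descent of the two-dimensional representation from $K$ to $\CC$: the paper invokes directly that every representation of the split reductive group $G_{K}$ is already defined over $\CC$ (\name{Jantzen}), so one may take $\CC x_{1}\oplus\CC y_{1}$ to be $G$-stable from the outset; you instead spread the representation to a morphism $Y\to\Hom(G,\GL_{2})$, use rigidity ($H^{1}(G,\gl_{2})=0$) to land in a single $\GL_{2}$-orbit, and trivialize the resulting torsor via specialness of the centralizer. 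Both arguments are correct; the paper's citation is shorter, while yours is self-contained and makes transparent why no \'etale cover is needed at this step.
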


\begin{proof} 
It is known that $\Aut(K[x,y])$ has the structure of an amalgamated product for any field $K$ of characteristic zero (\cite{Ku1953On-polynomial-ring}). This implies that every reductive $K$-group action on $\AA^{2}_{K}$ is linearizable (\cite{Ka1979Automorphism-group}). Taking for $K$ the field of fractions of $\OOO(Y)$ this means that there exist  $x_1,y_1 \in K[x,y]$ such that $K[x,y] = K[x_1,y_1]$ and that $K x_1 \oplus K y_1$ is stable under $G_{K}$. Since every representation of $G_{K}$ is defined over $\CC \subset K$ (see e.g.  \cite[Corollary~II.2.9]{Ja2003Representations-of}) we can assume that $\CC x_{1} \oplus \CC y_{1}$ is $G$-stable.
Clearly, there is a $t\in\OOO(Y)$ such that $\OOO(Y)_{t}[x,y] = \OOO(Y)_{t}[x_1,y_1]$ and so $\AA^{2}\times U$ is isomorphic to $V \times U$ as a $G$-varieties where $U := Y_{t}$ and $V := (\CC x_1 \oplus \CC y_1)^{*}$, the dual representation. 
\end{proof}

\begin{proof}[Proof of Theorem~\ref{linearization.thm}]
Set $A:=\OOO(C)$ so that $\OOO(\AA^{2}\times C) = A[x,y]=:R$. We have seen in Lemma~\ref{opentrivial.lem} that there exist  $t \in A$ and $x_1,y_1 \in A_{t} := A[t^{-1}]$ such that $A_{t}[x,y] =A_{t}[x_1,y_1]$ and that $\CC x_1 \oplus \CC y_1$ is $G$-stable. Clearly, $C$ is obtained from $C_{t}$ by adding a finite number of points: $C = C_{t}\cup\{c_{1},c_{2},\ldots,c_{k}\}$. Moreover, every open set $C_{j}:=C_{t}\cup\{c_{1},c_{2},\ldots,c_{j}\}$ is an affine factorial curve. Hence, by induction, we can assume that $C$ is obtained from $C_{t}$ by adding a single point $c_{0}$, i.e., that $t$ has a simple zero in $c_{0}\in C$ and that $At \subset A$ is the maximal ideal at $c_{0}$. Now the claim follows from the next lemma. (We only need the special case where the $G$-action on $C$ trivial.)
\end{proof}

\begin{lem}\lab{linearization.lem} 
Let $C$ be an affine smooth curve and let $G$ be a reductive group acting on $X:=\AA^{2} \times C$ 
such that the projection $p\colon X \to C$ is $G$-equivariant. Let $c_{0}\in C$ be a fixed point of $G$  and set $C':=C\setminus\{c_{0}\}$. Assume that the following holds:
\be
\item There is a generator $t$ of the maximal ideal $\mm_{c_{0}}\subset \OOO(C)$ such that $\CC t$ is $G$-stable;
\item $X':=p^{-1}(C')$ is $G$-isomorphic to the product $W \times C'$ where $W$ is a two-dimensional representation of $G$.
\ee 
Then $X$ is $G$-isomorphic to $W \times C$.
\end{lem}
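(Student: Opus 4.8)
The goal is to extend a $G$-trivialization over $C' = C \setminus \{c_0\}$ across the missing point $c_0$. Set $A := \OOO(C)$, $A' := \OOO(C') = A_t$, and $R := \OOO(X) = A[x,y]$ with $G$ acting on $R$, $A$-linearly modulo the $G$-action on $C$, fixing $c_0$. Hypothesis (b) says $R_t = R \otimes_A A_t \cong A_t[u,v]$ where $\CC u \oplus \CC v = W^*$ is a $G$-stable subspace. The first step is to analyze the $G$-action near $c_0$: since $G$ is reductive and fixes $c_0$, the fiber $X_{c_0} = p^{-1}(c_0) \cong \AA^2$ carries a $G$-action which, by Kambayashi's theorem (the field-$\CC$ case of the amalgamated-product argument used in Lemma~\ref{opentrivial.lem}), is linearizable; so there is a $G$-stable plane $\CC \bar x \oplus \CC \bar y \subset R/tR$ generating $R/tR$ over $\CC$. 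Because $G$ is reductive and $t$ spans a $G$-stable line, one can lift $\bar x, \bar y$ to a $G$-semi-invariant complement: choose a $G$-stable graded-type splitting of $R$ and pick $G$-eigenvector lifts $x', y' \in R$ of $\bar x, \bar y$. By Nakayama at $c_0$ (or rather: $R$ is a finitely generated flat, hence free, $A$-module locally at $c_0$ after checking $R$ is a polynomial ring over the local ring $A_{\mm_{c_0}}$ — this is where one invokes that an $\AA^2$-fibration over a DVR is trivial, or argues directly), $x', y'$ generate $R$ over $A$ in a neighborhood of $c_0$; shrinking, we may assume $R_s = A_s[x',y']$ for some $s \in A$ with $s(c_0) \neq 0$, and $\CC x' \oplus \CC y'$ is $G$-stable.

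**Gluing.** Now we have two $G$-equivariant coordinate systems: $(u,v)$ over $C' = \Spec A_t$, and $(x',y')$ over the neighborhood $\Spec A_s$ of $c_0$, with $\{t \neq 0\} \cup \{s \neq 0\} = C$ since $s(c_0) \neq 0$. On the overlap $\Spec A_{st}$ both trivialize $X$, so there is a $G$-equivariant automorphism $\Psi$ of $\AA^2 \times \Spec A_{st}$ over $\Spec A_{st}$, i.e. an element of $\Aut_{A_{st}}(A_{st}[x,y])^G$, comparing them. To globalize, I want to split $\Psi$ (up to a $G$-equivariant base-change adjustment) as a product $\Psi = \Psi_s \cdot \Psi_t$ where $\Psi_s \in \Aut_{A_s}^G$ and $\Psi_t \in \Aut_{A_t}^G$ — a "$G$-equivariant Čech-type" patching. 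The amalgamated product structure of $\Aut(K[x,y])$ for $K$ the fraction field of $A$, together with the fact that $G$ acts trivially on the target line so the linear part is already matched, lets one decompose the automorphism; the key is that each factor (affine/triangular) can be spread out to have coefficients regular either at $c_0$ or on $C'$ — and here reductivity lets us take the decomposition $G$-equivariantly, since $G$-stable subspaces have $G$-stable complements. Once $\Psi = \Psi_s \Psi_t$ with the indicated denominators, redefining the trivialization over $C'$ by $\Psi_t$ and over $\Spec A_s$ by $\Psi_s^{-1}$ makes the two agree on the overlap, hence glue to a global $G$-equivariant isomorphism $X \cong W \times C$.

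**The main obstacle.** The delicate point is the $G$-equivariant splitting $\Psi = \Psi_s \Psi_t$ with prescribed poles. Without $G$ this is a classical "unipotent patching" / elementary-automorphism argument exploiting that $\Aut_K(K[x,y])$ is generated by affine and elementary (Jonquières) automorphisms and that elementary automorphisms form a group one can manipulate over a PID; with the $G$-action one must check that the amalgamated-product factorization of the comparison automorphism over $K$ can be chosen with each factor a $G$-automorphism, and then that the spreading-out of the denominators is compatible with the $G$-eigenspace decomposition. Reductivity of $G$ (finiteness of invariants, existence of $G$-stable complements, every representation defined over $\CC$) is exactly what makes each of these steps go through, but assembling them carefully — in particular controlling that the triangular factor's polynomial coefficient, a priori in $A_{st}$, can be split into an $A_s$-part and an $A_t$-part as a $G$-eigenfunction, using that $t$ has a simple zero at $c_0$ and $A$ is a Dedekind domain so $A_s + A_t = A$ (as $s,t$ have no common zero) — is the technical heart of the proof and where I expect to spend most of the effort.
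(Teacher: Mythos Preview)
Your two-chart gluing plan has a genuine gap at the very first step, constructing a $G$-equivariant trivialization near $c_0$. You linearize the fiber action and lift the fiber coordinates $\bar x,\bar y$ to $G$-eigenvectors $x',y'\in R$, then assert these are algebra generators of $R$ over $A$ near $c_0$. But lifts of fiber coordinates need not be polynomial coordinates: in $A[x,y]$ with $A$ a DVR with uniformizer $t$, the pair $x'=x(1+ty)$, $y'=y$ reduces to $x,y$ modulo $t$ yet $A[x',y']\subsetneq A[x,y]$. (Note also that $R$ is not finitely generated as an $A$-module, so the Nakayama appeal is misplaced.) Sathaye's theorem does give \emph{some} coordinates over the DVR, but not $G$-equivariant ones, and producing $G$-equivariant $A$-coordinates on $R=A[x,y]$ is essentially the content of the lemma itself---indeed, in the application where $G$ acts trivially on $C$, this is exactly the statement of Theorem~\ref{linearization.thm}, which is proved \emph{using} the present lemma. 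Even granting the second chart, the $G$-equivariant splitting $\Psi=\Psi_s\Psi_t$ you flag as ``the technical heart'' has no evident mechanism: the amalgamated-product factorization over $K=\CC(C)$ is highly non-unique, and nothing forces a choice in which each factor is simultaneously $G$-equivariant and has denominators confined to one of $s,t$.

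The paper avoids all of this by never introducing a second chart. It works entirely with the given $C'$-coordinates $x_1,y_1$ (your $u,v$) and iteratively improves them. After clearing powers of $t$ so that $\bar x_1,\bar y_1\in R/tR=\CC[x,y]$ span a $2$-dimensional $G$-stable plane, one sets $N(x_1,y_1):=n+m$ where $n,m$ are minimal with $t^nx,t^my\in A[x_1,y_1]$; the goal is $N=0$. If $N>0$ then $\bar x_1,\bar y_1$ are algebraically dependent, and here the paper uses the \emph{precise} form of Sathaye's result rather than the black-box ``$\AA^2$ over a DVR is trivial'': the minimal relation $F(\bar x_1,\bar y_1)=0$ has $F$ a variable in $\CC[w,z]$, and there is a uniquely determined second variable $H$ of strictly smaller degree. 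Uniqueness forces $\CC F$ and $\CC(H+\alpha)$ (for suitable $\alpha$) to be $G$-stable; then $x_2:=F(x_1,y_1)$, $y_2:=H(x_1,y_1)+\alpha$ are new $G$-semi-invariant elements with $A[x_2,y_2]=A[x_1,y_1]$, and after dividing out the powers of $t$ that now appear, one checks $N$ strictly drops. This descent replaces your cocycle-splitting entirely.
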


\begin{proof}
Set $A:=\OOO(C)$ so that $\OOO(\AA^{2}\times C) = A[x,y]=:R$. By assumption there exist $x_1,y_1 \in R$  such that $R_{t}=A_{t}[x,y] =A_{t}[x_1,y_1]$ and that $\CC x_1 \oplus \CC y_1$ is $G$-stable. Denoting by 
$\bar{x_1}, \bar{y_1}$ the residue classes in $\bar R := R/Rt = \CC[x,y]$ we obtain a linear
$G$-homomorphism  $\rho\colon\CC x_1 \oplus \CC y_1 \to \CC \bar{x_1} + \CC 
\bar{y_1} \subset \bar R$. Deviding $x_{1}$ and $y_{1}$ by the same power of $t$ we can assume that $\rho$ is non-zero. If the image $\CC\bar{x_1} + \CC\bar{y_1}$ has dimension
$1$ then the kernel of $\rho$ is a one-dimensional representation $\CC h$ of $G$, so that  
$\CC x_{1}\oplus\CC y_{1}=\CC h \oplus \CC h'$ where $\CC h'$ is $G$-stable. Now we can divide $h$ by a suitable power of $t$.  In this way we
arrive at a situation where $\dim(\CC\bar{x_1} + \CC\bar{y_1}) = 2$.  Define 
\begin{align}
n(x_1,y_1) &:= \min\{n\geq 0\mid t^nx \in A[x_1,y_1]\},\\
m(x_1,y_1) &:= \min\{m\geq 0\mid t^my \in A[x_1,y_1]\},\notag
\end{align}
and set 
\begin{equation} 
N(x_1,y_1):= n(x_1,y_1)+m(x_1,y_1).
\end{equation}
Then we find the following expressions
\begin{equation}\tag*{($*$)}\label{eq1}
t^{n(x_1,y_1)} x = \sum_{i,j} a_{ij}{x_1}^i{y_1}^j, \quad t^{m(x_1,y_1)}y = \sum_{i,j} b_{ij}{x_1}^i{y_1}^j
\end{equation}
where $a_{ij}, b_{ij} \in A$, and not all $a_{ij}$ and not all $b_{ij}$ belong to the
maximal ideal $At$.  If $N(x_1,y_1)=0$ we are done. Otherwise it follows from \ref{eq1} that
$\bar{x_{1}}, \bar{y_{1}}$ are algebraically dependent.
 
Denote by $F \in \CC[w,z]$ the minimal equation
$F(\bar{x_1}, \bar{y_1})=0$. Clearly, $F$ is the generator of the kernel of the
canonical homomorphism $\phi\colon \CC[w,z] \to \bar R$ given by
$\phi(w):=\bar{x_1}$ and $\phi(z):=\bar{y_1}$. Now it follows from \cite[Remark~2.1]{Sa1983Polynomial-ring-in}  
that $\CC[\bar{x_1}, \bar{y_1}]\subset \bar R$ is a polynomial
ring in one variable, or, equivalently, that $F$ is a variable in
$\CC[w,z]$, i.e., there is a $H\in\CC[w,z]$ such that $\CC[w,z]=\CC[F,H]$.

If we define a (linear) $G$-action on $\CC[w,z]$
by using the same matrices as for the representation on $\CC x_1\oplus \CC y_1$, then the
homomorphism $\phi$ is obviously $G$-equivariant, hence the
kernel is $G$-stable. This implies that $\CC F \subset \CC[w,z]$ is 
$G$-stable. Now we use the fact that there is a uniquely defined second
variable $H \in\CC[w,z]$ (up to an additive constant) which has
lower degree than $F$ (see \cite[Theorem 3(1)]{Sa1983Polynomial-ring-in}). It follows that $\CC H + \CC\subset \CC[w,z]$ is
$G$-stable and so $\CC(H + \alpha)$ is $G$-stable for a suitable
$\alpha\in\CC$. 

Putting 
$x_2:=F(x_1,y_1)\in R$ and $y_2:=H(x_1,y_1)+\alpha\in R$  we see that $\CC x_2$
and $\CC y_2$ are $G$-stable lines in $A$. Moreover, we have
$\CC[x_2,y_2] = \CC[x_1,y_1] \subset R$ and so  $A[x_2,y_2] = A[x_1,y_1]$. Since
$\bar {x_2} = F(\bar x_{1},\bar y_{1})=0$ we can divide $x_2$ by a suitable power of $t$ such that
$x_3:=\frac{x_2}{t^s} \in R \setminus Rt$ for some $s>0$. Similarly, $y_{3}:= \frac{y_{2}}{t^{r}}\in R \setminus Rt$ 
for some $r\geq 0$.

In order to see that this procedure will finally stop we calculate the number
$N(x_3,y_3)$. Since $A[x_{2},y_{2}] = A[x_1,y_1]$ we have $n(x_{2},y_{2}) =
n(x_1,y_1)$ and $m(x_{2},y_{2})=m(x_1,y_1)$, and one of them is $>0$, say $n(x_1,y_1)>0$. 
Using the first equation in \ref{eq1} for
$x_2= t^s x_3$ and $y_{2}=t^{r}y_{3}$  we see that
$\overline{\sum_j a_{0j}y_{2}^j}=0$. It follows that either $r>0$ or $\overline{a_{0j}}=0$  for all
$j$. In both cases we can divide both sides of the equation by
$t$ and so $n(x_3,y_3) < n(x_{2},y_{2})$, hence $N(x_{3},y_{3}) < N(x_{1},y_{1})$.
\end{proof}

\begin{rem} The crucial step in the proof above  is \name{Sathaye}'s result  showing that
$\CC[\bar{x_1}, \bar{y_1}]\subset \CC[x,y]$ is a polynomial
ring in one variable in case $\bar{x_1}, \bar{y_1}$ are algebraically
dependent. It is interesting to remark that this result is not needed in
case $G$ is non-commutative, since there is no faithful action of a
non-commutative group $G$ on $\CC[\bar x_{1},\bar y_{1}]$ in case this algebra is of dimension 1, because there is no
faithful action of $G$ on a (rational) curve.
\end{rem}

\bigskip
\section{Linearization of group actions on $\AA^{3}$}
We now give the proof of Theorem~\ref{A3linearization.thm} stating that every faithful action of a non-finite reductive 
group $G$ on $\AA^{3}$ is linearizable.
\begin{proof} 
(a)  It follows from \name{Luna}'s  Slice Theorem that  any action of a
reductive group on $\AA^n$ with zero-dimensional quotient $\AA^{n}\quot G$ is linearizable.
Moreover, if $\dim \AA^3\quot G = 1$, then the result is explicitely stated in \cite[Chap.~VI,
\S3, Theorem 3.2(5)]{KrSc1992Reductive-group-ac}. 
Finally, if $G^{0}$ is not $\Cst$, then the quotient $\AA^3\quot G$ has dimension  $\leq 1$ and we are done in this case.

(b) If $G \simeq \Cst$, then this is the main result of \cite{KaKoMa1997C-actions-on-C3-ar}. So we are left with the case of a non-connected $G$ 
such that $G^{0}\simeq \Cst$. 

(c) We fix an identification $G^{0}= \Cst$. By (b) we can assume that the action of $\Cst$ is linear with weights $n_{1}\geq n_{2}\geq 0 > n_{3}$, i.e. $t(x,y,z) = (t^{n_{1}}\cdot x, t^{n_{2}}\cdot y,t^{n_{3}}\cdot z)$, since in all other cases the quotient $\AA^{3}\quot G$ has dimension $\leq 1$, and so we are done by (a).

(d) Let us first consider the case where $n_{2}>0$. Then the hyperplane $U$ given by $z=0$  has the following description: 
$$
U = \{v\in \AA^{3} \mid \lim_{t\to 0}tv = 0\}.
$$
This implies that every $g\in G$ commutes with $\Cst$ and therefore stabilizes $U$. In fact, if $g$ does not commute with $\Cst$ then $g t g^{-1} = t^{-1}$ for all $t\in\Cst$ and so 
$$
gU = \{v\in \AA^{3} \mid \lim_{t\to \infty}tv = 0\}.
$$
This is a contradiction since  the right hand side equals the line $\{x=y=0\}$. It follows that $\CC z \subset \OOO(\AA^{3})$ is $G$-stable: $gz = \chi(g)\cdot z$ where $\chi$ is a character of $G$.  Thus the projection $p\colon \AA^{3}\to \CC_{\chi}$, $(x,y,z)\mapsto z$, is $G$-equivariant. Define $H:=\ker\chi$ and set $\Cdot := \CC_{\chi}\setminus\{0\}$. Then $p^{-1}(\Cdot)$ is $G$-isomorphic to the associated bundle $B:=G *_{H} p^{-1}(1)$. The action of $H$ on $p^{-1}(1) \simeq\AA^{2}$ is linearizable and so $B \simeq W\times \Cdot$ as an $H$-variety where $W$ is a two-dimensional representation of $H$ and $H$ acts trivially on $\Cdot$. Thus, by Lemma~\ref{linearization.lem}, the action of $H$ on $\AA^{3}$ is linearizable: $\AA^{3}$ is $H$ isomorphic to $W \times \CC$. In particular, the hyperplane $U$ is $H$-isomorphic to $W$ which implies that the representation of $H$ on $W$ can be extended to a representation of $G$. As a consequence, the associated bundle $B$ splits into a product:
$$
p^{-1}(\Cdot) \simto W \times \Cdot_{\chi} \text{ as a $G$-variety}.
$$
Now we can again apply Lemma~\ref{linearization.lem} and the claim follows.

(e) We are left with the case $m_{2}=0$. Here we have the following two hyperplanes
\begin{align*}
U_{0}&:=\{z=0\} =  \{v\in \AA^{3} \mid \lim_{t\to 0}tv \text{ exists}\},\\
U_{\infty}&:=\{x=0\} =  \{v\in \AA^{3} \mid \lim_{t\to \infty}tv \text{ exists}\}.
\end{align*}
Clearly, $U_{0}\cup U_{\infty}$ is stable under $G$ and therefore $\CC x \oplus \CC z\subset \OOO(\AA^{3})$ is a $G$-stable subspace. This implies that the linear projection $p\colon \AA^{3}\to \CC^{2}$, $(x,y,z)\mapsto (x,z)$ is $G$-equivariant. Now \cite[Proposition 1]{KrKu1996Equivariant-affine} implies that the action of $G$ on $\AA^{3}$ is linearisable.
\end{proof}

\bigskip
\section{Fibrations and fiber bundles}
We start with the following definitions.
\begin{defn} Let $X,Y,F$ be varieties. A morphism $\phi\colon X \to Y$ is called  {\it fibration with fiber $F$\/} if  $\phi$ is flat and all fibers of $\phi$ are reduced and isomorphic to $F$. If, in addition, $\phi$ is an affine morphism, hence $F$ is affine, then we say that  $\phi$ is an {\it affine fibration with fiber $F$\/}.

A morphism $\phi\colon X \to Y$ is called a {\it fiber bundle with fiber $F$\/} if $\phi$ is locally trivial in the \'etale topology with fiber $F$, i.e. for every $y\in Y$ there is an \'etale morphism $\mu\colon U \to Y$ such that $y\in\mu(U)$ and the $U\times_{Y}X \simto U\times F$ over $U$.
\end{defn}
The following problem goes back to a paper of \name{Dolgachev-Weisfeiler} \cite{VeDo1974Unipotent-group-sc}.
\begin{prob*} 
Is it true that every (affine) fibration with fiber $\An$ is a fiber bundle?
\end{prob*}
After several attempts the case of $\AA^{1}$-fibrations was solved in \cite{KaWr1985Flat-families-of-a}. For $\AA^{2}$-bundles there is a positive answer in case the base $Y$ is a smooth curve, see \cite{KaZa2001Families-of-affine}. We will give a short unified proof for both results, partially based on our Generic Isotriviality Theorem in section~\ref{GenIso.sec}.

\begin{thm}\lab{fibration.thm}
\be
\item Let $\phi\colon X \to Y$ be an affine fibration with fiber $\AA^{1}$. If $Y$ is normal, then $\phi$ is a fiber bundle, locally trivial in the Zariski-topology.
\item If $\phi\colon X \to Y$ is an affine fibration with fiber $\AA^{2}$ and  $Y$ a smooth curve, then $\phi$ is a fiber bundle, locally trivial in the Zariski-topology.
\ee
\end{thm}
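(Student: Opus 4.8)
The plan is to derive both statements from the Generic Equivalence Theorem of Section~\ref{GenIso.sec} together with the triviality of affine $\AA^{n}$-fibrations over a discrete valuation ring. Both assertions are local on $Y$, so I would first reduce to $Y=\Spec A$ with $A$ an irreducible normal finitely generated $\CC$-algebra (each connected component of a normal variety is irreducible), a Dedekind domain in case (b). Writing $X=\Spec B$, the ring $B$ is a finitely generated flat $A$-algebra all of whose fibres are reduced and isomorphic to $F$, with $F=\AA^{1}$ in (a) and $F=\AA^{2}$ in (b); since $A$ is normal and the fibres are geometrically normal, $B$ is a normal domain. Now apply the Generic Equivalence Theorem to $\phi\colon X\to Y$ and to the projection $\pr\colon Y\times F\to Y$, whose fibres agree by hypothesis: it yields a dominant \'etale $\mu\colon U\to Y$ with $X\times_{Y}U\simeq U\times F$, so the generic fibre $X_{\eta}$ becomes trivial over a finite separable extension of $K:=\CC(Y)$, i.e.\ $X_{\eta}$ is a $K$-form of $F$. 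Since $\Char K=0$ this form is trivial --- for $\AA^{1}$ this is elementary, and for $\AA^{2}$ it is \name{Kambayashi}'s theorem on the absence of nontrivial forms of the affine plane --- so $X_{\eta}\simeq F_{K}$, and spreading out we may fix $f\in A$ with $B_{f}\simeq A_{f}[t_{1},\dots,t_{n}]$, $n=\dim F$. It then remains to extend this triviality across the closed set $V(f)=Y\setminus\Spec A_{f}$.

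In case (b), $V(f)$ is a finite set of closed points $y_{1},\dots,y_{r}$; localising $A$ at $y_{i}$ gives a discrete valuation ring $R_{i}$, and $B\otimes_{A}R_{i}$ is a flat finitely generated $R_{i}$-algebra whose generic fibre is $\AA^{2}$ over $\mathrm{Frac}(R_{i})$ (by the previous step) and whose closed fibre is $\AA^{2}$ over $\CC$ (by hypothesis). \name{Sathaye}'s theorem on polynomial rings in two variables over a discrete valuation ring \cite{Sa1983Polynomial-ring-in} --- the characteristic-zero assumption removing the usual conditions on the residue fibre --- then gives $B\otimes_{A}R_{i}\simeq R_{i}[x,y]$, and any such isomorphism is already defined over some $A_{g}$ with $g\notin\mathfrak m_{y_{i}}$, so $\phi$ is Zariski-locally trivial near $y_{i}$; this proves (b). In case (a) one argues the same way at each height-one prime $\mathfrak p$ containing $f$: $A_{\mathfrak p}$ is a discrete valuation ring, and a flat affine $\AA^{1}$-fibration over it is a polynomial ring $A_{\mathfrak p}[t]$ --- this is classical and is contained in the arguments of \name{Kambayashi--Wright} \cite{KaWr1985Flat-families-of-a} and \name{Bass--Connell--Wright} \cite{BaCoWr1982The-Jacobian-conje}. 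Thus $B\otimes_{A}A_{\mathfrak p}$ is a polynomial ring over $A_{\mathfrak p}$ for every prime $\mathfrak p$ of height $\le 1$, and a patching of these local coordinates with the coordinate over $\Spec A_{f}$ --- using that $A$ is normal (so $B=\bigcap_{\hgt\mathfrak p=1}B_{\mathfrak p}$) and that $H^{1}$ of a line bundle on an affine variety vanishes --- shows that $B$ is Zariski-locally a polynomial ring $A'[t]$; equivalently $\phi$ is the projection of a line bundle. This proves (a).

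The crux of the argument is the discrete-valuation-ring case in each part. For $\AA^{2}$ this case is itself the substance of the theorem --- it is precisely \name{Sathaye}'s result --- and the point of the present approach is that the Generic Equivalence Theorem reduces the global statement over a curve to this one local statement with only an elementary spreading-out in between. For $\AA^{1}$ the discrete-valuation-ring case is genuinely classical, and the remaining difficulty lies in the patching step, where one must reconcile the generically chosen coordinate with the coordinates over the finitely many ``bad'' primes; this is where the normality of $Y$ and the vanishing of $H^{1}$ of line bundles on affine varieties are essential.
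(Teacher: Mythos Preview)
Your argument for part~(b) is correct and is essentially the paper's proof: generic triviality via the Generic Equivalence Theorem and the absence of forms of~$\AA^{2}$, then Sathaye's DVR criterion at each of the finitely many bad points, then spreading out.

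For part~(a) your argument coincides with the paper's up to the point where the ``bad locus'' $Y\setminus Y_{\text{\it bd}}$ is shown to have codimension~$\ge 2$: generic triviality plus the DVR step at every height-one prime of~$V(f)$. But your final ``patching'' step is a genuine gap. You invoke that $B=\bigcap_{\hgt\mathfrak p=1}B_{\mathfrak p}$ and that $H^{1}$ of a line bundle on an \emph{affine} variety vanishes; however, the open set $Y_{\text{\it bd}}$ (or $U\cap Y_{\text{\it bd}}$ for an affine neighbourhood~$U$ of a bad point) is not affine in general, and $H^{1}(U\cap Y_{\text{\it bd}},L)$ can be nonzero (think of $U=\AA^{2}$, $U\cap Y_{\text{\it bd}}=\AA^{2}\setminus\{0\}$). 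Knowing that $\phi$ is Zariski-locally trivial over $Y_{\text{\it bd}}$ does not, by any $H^{1}$ argument you have stated, extend local triviality across a codimension~$\ge 2$ set. What Hartogs/normality does give you is that \emph{global} triviality extends: if $\phi^{-1}(U\cap Y_{\text{\it bd}})\simeq (U\cap Y_{\text{\it bd}})\times\AA^{1}$, then $\OOO(\phi^{-1}(U))=\OOO(\phi^{-1}(U\cap Y_{\text{\it bd}}))=\OOO(U\cap Y_{\text{\it bd}})[t]=\OOO(U)[t]$.

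The paper supplies exactly the missing idea that converts local into global triviality. One passes to $\tilde Y:=X\times_{Y}X$ and the pulled-back fibration $\tilde\phi\colon \tilde X:=\tilde Y\times_{Y}X\to\tilde Y$, which carries two tautological sections $\sigma,\tau$; on $\tilde Y':=\tilde Y\setminus\Delta$ these are disjoint. A short lemma (an $\AA^{1}$-bundle with two disjoint sections is trivial, via the unique affine map sending the sections to $0$ and~$1$) then shows that over any affine open $U\subset\tilde Y'$ the restriction $\tilde\phi^{-1}(U_{\text{\it bd}})\to U_{\text{\it bd}}$ is \emph{globally} trivial, and the normality/codimension argument above finishes. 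Since $\tilde Y'\to Y$ is smooth and surjective, this yields local triviality of~$\phi$ on~$Y$. Your sketch is missing this passage to a base with two disjoint sections (or an equivalent device), without which the codimension~$\ge 2$ reduction does not conclude.
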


\begin{rem} The normality assumption in part (a) and (b) is essential. \name{Nori} gave an example of an $\AA^{1}$-bundle over the cusp $C:=\VVV(y^{2}-x^{3})\subset \CC^{2}$ which is not a fibration (see \cite[section 3.4]{KaWr1985Flat-families-of-a}). Consider the normalisation $\eta\colon\AA^{1}\to C$ given by $t\mapsto(t^{2},t^{3})$, and define $\phi\colon\AA^{1}\to C\times \PP^{1}$ by $t\mapsto (\mu(t),t)$. This is a closed embedding and $X:=C\times\PP^{1}\setminus\phi(\AA^{1})$ is an affine variety. If follows that the projection $p\colon X \to C$ is an $\AA^{1}$-fibration, but there is no neighborhood $U$ of the singular point of $C$ such that $p^{-1}(U)\to U$ is a trivial bundle.
\end{rem}

\begin{rem} The main result of \name{Kambayashi-Wright} in \cite{KaWr1985Flat-families-of-a} is a variant of our Theorem~\ref{fibration.thm}(a). In their setting $Y$ is a Noetherian scheme, $\phi$ is faithfully flat of finite type and the fiber of every $y\in Y$ is isomorphic to $\Aone_{\kappa(y)}$. It is not difficult to see, using the generic isotriviality, that this implies our result.
\end{rem}

\begin{rem} The first two unknown cases are $\AA^{3}$-fibrations over smooth curves and $\Atwo$-fibrations over smooth surfaces. In his thesis \name{V\'en\'ereau} constructed a polynomial $p(x,y,z,w)$ with the property that $p\colon \CC^{4}\to \CC$ is an $\AA^{3}$-fibration and $(p,w)\colon\CC^{4}\to \CC^{2}$ is an $\AA^{2}$-fibration, but in both cases it is unknown if the fibration is locally trivial in a neighbourhood of $0$, cf. \cite{KaZa2004Venereau-polynomia}.
\end{rem}

\begin{rem}\lab{BCW.rem} At this point we should mention the following very interesting result due to \name{Bass, Conell} and \name{Wright} \cite{BaCoWr1976Locally-polynomial2}: {\it Every $\An$-bundle over an affine variety which is locally trivial in the Zariski topology has the structure of a vector bundle.} As a consequence we get the following corollary.
\end{rem}

\begin{cor} 
\be
\item Let $\phi\colon X \to Y$ be an affine fibration with fiber $\AA^{1}$. If $Y$ is affine and normal, then $\phi$ has the structure of a line bundle.
\item If $\phi\colon X \to Y$ is an affine fibration with fiber $\AA^{2}$ and  $Y$ an affine smooth curve, then $\phi$ has the structure of a vector bundle of rank 2.
\ee
\end{cor}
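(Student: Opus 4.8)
The plan is to deduce both statements directly by combining Theorem~\ref{fibration.thm} with the result of \name{Bass, Connell} and \name{Wright} recalled in Remark~\ref{BCW.rem}. For part (a), I would first invoke Theorem~\ref{fibration.thm}(a): since $Y$ is normal (and, by hypothesis, affine), the affine fibration $\phi\colon X \to Y$ with fiber $\AA^{1}$ is already a fiber bundle, locally trivial in the Zariski-topology. Now $Y$ is affine, so the Bass--Connell--Wright theorem applies and endows $X$ over $Y$ with the structure of a vector bundle; since the fibers are $\AA^{1}$, this bundle has rank $1$, i.e.\ $\phi$ is a line bundle. Part (b) is entirely parallel: a smooth curve is normal, so Theorem~\ref{fibration.thm}(b) turns $\phi$ into an $\AA^{2}$-bundle over $Y$ which is locally trivial in the Zariski-topology, and since $Y$ is affine the Bass--Connell--Wright theorem upgrades this to a vector bundle structure, necessarily of rank $2$.

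The only steps requiring a word of care are bookkeeping. First, one should check that the hypotheses of Bass--Connell--Wright are met, namely that the base is affine and that the bundle is Zariski-locally trivial; both are supplied, respectively, by the hypotheses of the corollary and by Theorem~\ref{fibration.thm}. Second, one should observe that there is no tension with the ``affine'' assumption on the morphism $\phi$: a vector bundle over an affine variety is itself an affine variety, so the vector bundle structure on $X$ is compatible with $X$ being affine over $Y$.

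I do not expect any genuine obstacle here; the substantive work has already been carried out in Theorem~\ref{fibration.thm}, and the content of the corollary is simply that the Zariski local triviality established there is exactly the input needed to trigger the linearization of the transition functions provided by Bass--Connell--Wright.
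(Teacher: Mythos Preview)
Your proposal is correct and matches the paper's own reasoning exactly: the corollary is stated immediately after Remark~\ref{BCW.rem} with the phrase ``As a consequence we get the following corollary,'' and no further proof is given, since it is precisely the combination of Theorem~\ref{fibration.thm} with the Bass--Connell--Wright theorem that you describe.
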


It is clear from the definition that a fibration $\phi\colon X \to Y$ with a smooth fiber $F$ is a smooth morphism (see \cite[III.10 Definition]{Ha1977Algebraic-geometry}). In particular, $X$ is normal in case $Y$ is normal. In fact, we have an isomorphism  of the completions $\widehat\OOO_{x}\simeq\widehat\OOO_{y}[\![t_{1},\ldots,t_{n}]\!]$ where $y=\phi(x)$ and $n=\dim F$.

We will also use the following well-known fact. If, for a given point $y\in Y$, there is a smooth morphism $\psi\colon Z\to Y$ such that $y\in\psi(Z)$ and $Z \times_{Y}X \simeq Z\times F$ over $Z$, then there is also an \'etale morphism $\eta\colon U \to Y$ with the same property. 

Finally, every fiber bundle with fiber $\Aone$ is locally trivial in Zariski-topology, because the automorphism group of $\Aone$ is a special group (see \cite{KrSc1992Reductive-group-ac}).

The two basic results which we will need in the proof are the following. If $S$ is a ring and $n\in\NN$, we use $S^{[n]}$ to denote the polynomial ring over $S$ in $n$ variables.

\begin{prop}\lab{noforms.prop}
Let $L/K$ be a field extension where $\Char K=0$. Let $R$ be a finitely generated $K$-algebra such that $L\otimes_{K}R \simeq L^{[n]}$. If  $n=1$ or $n=2$, then $R\simeq K^{[n]}$.
\end{prop}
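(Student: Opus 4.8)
The plan is to reduce the statement, via spreading out, to the case of a \emph{finite Galois} extension, and then to carry out non-abelian Galois descent; for $n=2$ the crucial input is the amalgamated-product structure of $\Aut(\AA^2_L)$ over a field of characteristic $0$, the same tool used in Lemma~\ref{opentrivial.lem}.

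\textbf{Reduction to a finite Galois extension.} Note first that $R$ is a domain, being a subring of $L\otimes_K R\simeq L^{[n]}$. An isomorphism $L\otimes_K R\simto L^{[n]}$ together with its inverse is encoded by finitely many polynomial identities with finitely many coefficients in $L$, so it is already defined over some finitely generated subfield $L_0\subseteq L$, and we may assume $L/K$ is finitely generated. Choose a transcendence basis $t_1,\dots,t_d$ of $L/K$ and put $E:=K(t_1,\dots,t_d)$, so that $L/E$ is finite. Since $L\otimes_K R=L\otimes_E(E\otimes_K R)$, once the proposition is known for finite extensions over an arbitrary field of characteristic $0$ we get $E\otimes_K R\simeq E^{[n]}$, that is, $K(t_1,\dots,t_d)\otimes_K R\simeq K(t_1,\dots,t_d)^{[n]}$. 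Spreading out again, this isomorphism is defined over $K[t_1,\dots,t_d]_f$ for some nonzero $f$, and specializing $(t_1,\dots,t_d)$ to a point of $K^d$ off $\{f=0\}$ — which exists since $K\supseteq\QQ$ is infinite — yields $R\simeq K^{[n]}$. Hence it suffices to treat a finite extension, and, replacing it by its Galois closure (under which the hypothesis persists), a finite Galois extension $M/K$; set $\Gamma:=\gal(M/K)$.

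\textbf{Galois descent.} By non-abelian descent, the $K$-algebras $R$ with $M\otimes_K R\simeq M^{[n]}$ are classified up to isomorphism by the pointed set $H^1(\Gamma,\Aut(\AA^n_M))$, and the class of $R$ is trivial exactly when $R\simeq K^{[n]}$; so we must show this set is trivial for $n=1,2$. For $n=1$, $\Aut(\AA^1_M)$ is the affine group $M^{+}\rtimes M^{*}$, and the triviality of $H^1(\Gamma,M^{*})$ (Hilbert~90) and of $H^1(\Gamma,M^{+})$ (additive Hilbert~90) yields the claim via the exact sequence of pointed sets. For $n=2$ we use that, in characteristic $0$, $\Aut(\AA^2_M)=E\ast_{B}J$ is the amalgam of the affine group $E=M^{2}\rtimes\GL_2(M)$ and the de Jonqui\`eres group $J$ along $B=E\cap J$ (van der Kulk), a decomposition respected by $\Gamma$. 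One checks that $H^1(\Gamma,E)$ and $H^1(\Gamma,J)$ are trivial (for $E$ from the triviality of $H^1$ of $\GL_2(M)$ and of the descended vector space $M^{2}$; for $J$ by d\'evissage, $J$ being built from copies of $M^{*}$, $M^{+}$ and $M[x]$, all with trivial $H^1$). Given a cocycle $c\colon\Gamma\to\Aut(\AA^2_M)$, the rule $\sigma\star v:=c_\sigma\cdot{}^{\sigma}v$ defines an action of the \emph{finite} group $\Gamma$ on the Bass--Serre tree of the amalgam, without inversions, since it preserves the bipartition of the tree into $E$-type and $J$-type vertices; hence $\Gamma$ fixes a vertex. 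Replacing $c$ by a cohomologous cocycle we may assume the fixed vertex is $E$ or $J$ itself, so that $c$ takes values in $E$ or in $J$; triviality of $H^1(\Gamma,E)$ and $H^1(\Gamma,J)$ then forces $c$ to be a coboundary. This proves the vanishing, and the proposition follows.

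\textbf{Main obstacle.} The reductions are routine; the genuine content is the vanishing of $H^1(\Gamma,\Aut(\AA^2_M))$ — equivalently Kambayashi's theorem that $\AA^2$ has no non-trivial separable forms — which rests on the amalgamated-product structure of $\Aut(\AA^2_M)$, the fixed-point property of finite group actions on trees, and the (elementary but not purely formal) computation of $H^1$ for the affine and de Jonqui\`eres groups. For $n=1$ one may instead argue geometrically: the smooth projective model of $\Spec R$ is a curve of genus $0$ which, acquiring a rational point from its unique point at infinity, is $\simeq\PP^1_K$, whence $R\simeq K^{[1]}$.
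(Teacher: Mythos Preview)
Your argument is correct. Note, however, that the paper does not actually give a proof of this proposition: it declares the case $n=1$ ``well-known and not difficult'' and for $n=2$ simply cites Kambayashi \cite{Ka1975On-the-absence-of-}, pointing to the amalgamated-product structure of $\Aut_K(K[x,y])$ as the essential input. What you have written out is, in substance, Kambayashi's proof: reduce to a finite Galois extension $M/K$, interpret $K$-forms of $\AA^2$ via the pointed set $H^1(\Gamma,\Aut(\AA^2_M))$, let a cocycle act on the Bass--Serre tree of $E\ast_B J$, use the fixed-vertex property for finite groups acting without inversions to conjugate the cocycle into $E$ or $J$, and kill it there by Hilbert~90 and its additive/$\GL_n$ variants. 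Your preliminary reduction from an arbitrary extension $L/K$ to a finite one (spread out to a finitely generated subextension, peel off a transcendence basis, specialize at a $K$-point avoiding a hypersurface) is a clean addition that makes the statement hold in the generality the paper uses it; the geometric alternative you sketch for $n=1$ via the unique point at infinity on the genus-$0$ compactification is also standard and correct. The only places where you are terse are the d\'evissage computations of $H^1(\Gamma,E)$ and $H^1(\Gamma,J)$; these do require a little care with twisted coefficients in the non-abelian long exact sequence, but they go through exactly as you indicate.
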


\begin{cor} 
Every fiber bundle with fiber $\AA^{1}$ or $\AA^{2}$ is locally trivial in the Zariski topology.
\end{cor}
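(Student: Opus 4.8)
The plan is to reduce the statement, via a \emph{generic triviality} argument, to the ``absence of forms'' provided by Proposition~\ref{noforms.prop}. To begin with, a fiber bundle $\phi\colon X\to Y$ with fiber $\AA^{n}$ is flat and affine with all fibers reduced and isomorphic to $\AA^{n}$, hence is an affine fibration with fiber $\AA^{n}$; moreover the case $n=1$ is already contained in the remark recorded above, since $\Aut(\AA^{1})$ is a special group, so the real content is the case $n=2$ (the argument below covers $n=1$ as well). We must produce, for each $y\in Y$, a Zariski-open $W\ni y$ with $\phi^{-1}(W)\simeq\AA^{n}\times W$. Since this is local on $Y$, we may assume $Y=\Spec A$ is an affine variety and $X=\Spec B$ with $B$ a finitely generated flat $A$-algebra; set $K:=\CC(Y)$, the field of rational functions on $Y$.

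The core step is to trivialize the generic fiber. Fix $y$ and choose, by the fiber bundle hypothesis, an \'etale $\mu\colon U\to Y$ with $y\in\mu(U)$ and $X\times_{Y}U\simeq\AA^{n}\times U$ over $U$. As $\mu$ is \'etale, $\mu(U)$ is open and nonempty, hence contains the generic point $\eta$ of $Y$; pick $u\in U$ over $\eta$. Then $L:=\kappa(u)$ is a finite \emph{separable} extension of $K$, and restricting the trivialization along $\Spec L\to U$ gives $X\times_{Y}\Spec L\simeq\AA^{n}_{L}$, i.e. $(B\otimes_{A}K)\otimes_{K}L\simeq L^{[n]}$. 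Since $\Char K=0$ and $n\le 2$, Proposition~\ref{noforms.prop} applies and yields $B\otimes_{A}K\simeq K^{[n]}$. Choosing polynomial coordinates in $B\otimes_{A}K$ over $K$ and clearing denominators, there is $0\neq f\in A$ such that the induced homomorphism $A_{f}^{[n]}\to B_{f}$ is an isomorphism; thus $\phi$ is trivial over the dense open $Y_{f}\subseteq Y$.

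It remains to reach the points of the complement $\VVV(f)$, and this globalization is where I expect the real difficulty to lie. Let $Z\subseteq Y$ be the closed set of points no Zariski-neighbourhood of which trivializes $\phi$; then $Z\subseteq\VVV(f)$ is a proper closed subvariety, and $\phi$ restricted to a component of $Z$ is again a fiber bundle with fiber $\AA^{n}$, over a base of strictly smaller dimension, so by Noetherian induction $\phi|_{Z}$ is Zariski-locally trivial over $Z$. The point one must still overcome is the passage from a trivialization over a locally closed subvariety $Z'\subseteq Z$ through a point $y$ to a trivialization over a true $Y$-neighbourhood of $y$: for $n=1$ this is free because $\Aut(\AA^{1})$ is special, while for $n=2$ it amounts to the absence of nontrivial \'etale forms of $\AA^{2}$ over the \emph{local rings} $\OOO_{Y,y}$ (not merely over fields, as in Proposition~\ref{noforms.prop}) --- equivalently, to reducing the structure group of the \'etale $\Aut(\AA^{2})$-torsor to the special subgroup of affine transformations of $\AA^{2}$. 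A reasonable line of attack is to use that $\phi$ is a \emph{smooth} morphism with fiber $\AA^{2}$: extend a trivialization over the affine $Z'$ successively across the infinitesimal neighbourhoods of $Z'$ in $Y$ --- the obstructions living in $H^{1}$ of quasi-coherent sheaves on $Z'$, hence vanishing --- to obtain a trivialization over the formal completion of $Y$ along $Z'$, then algebraize it and glue with the trivialization already in hand over $Y_{f}$.
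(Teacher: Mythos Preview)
The paper gives no proof of this corollary; it is placed immediately after Proposition~\ref{noforms.prop} and is evidently meant to be read as a direct consequence. Your second paragraph --- applying Proposition~\ref{noforms.prop} at the generic point to obtain triviality over a dense Zariski-open $Y_{f}$ --- is correct and is precisely the content that Proposition~\ref{noforms.prop} delivers. This is the entirety of what the paper (implicitly) supplies, and here your argument and the paper's agree.

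The difficulty you flag in your third paragraph is real, and your proposed resolution does not close it. You correctly observe that what is still needed is the absence of nontrivial \'etale forms of $\AA^{2}$ over the \emph{local rings} $\OOO_{Y,y}$, which is strictly stronger than Proposition~\ref{noforms.prop}. Your infinitesimal step is actually fine: since $\phi$ is affine and smooth, any lift of the coordinate functions from $Z'$ to an infinitesimal thickening gives a morphism of flat $Z'_{k}$-schemes that is an isomorphism modulo nilpotents, hence an isomorphism; so one does get a trivialization over the formal completion $\widehat{Y}_{Z'}$. But the algebraization step is circular: Artin approximation converts a formal solution of the equations ``$X$ is a trivial $\AA^{n}$-bundle'' into an \emph{\'etale}-local solution, which is exactly your starting hypothesis, not a Zariski-local one. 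The final gluing step is likewise unjustified: the two trivializations differ by a section of the automorphism sheaf $\underline{\Aut}(\AA^{2})$, which is an ind-group rather than a linear algebraic group, so there is no off-the-shelf cocycle argument allowing you to adjust them to match.

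In summary, your proof matches the paper on the only step the paper actually indicates (generic triviality from Proposition~\ref{noforms.prop}); you correctly recognize that the full statement for $n=2$ over an arbitrary base requires more, but the formal--algebraization route you sketch does not supply it.
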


\begin{prop}\lab{discval.prop}
Let $A$ be a discrete valuation ring with quotient field $Q(A)=K$, maximal ideal $\mm$ and residue field $k = A/\mm$ where $\Char k = 0$. Let $R\supset A$ be a domain,  finitely generated and  flat over $A$ such that  $K\otimes_{A}R\simeq K^{[n]}$ and  $k\otimes_{A}R = R/\mm R \simeq k^{[n]}$. If $n=1$ or $n=2$, then $R \simeq A^{[n]}$.
\end{prop}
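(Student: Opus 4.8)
The plan is to prove both cases by the same device: lift an explicit polynomial coordinate system of the generic fibre $R[t^{-1}] = K\otimes_{A}R$ into $R$ and then improve it, using the following \emph{divide-by-$t$ observation}. If $f_{1},\dots,f_{m}\in R$ are such that $R[t^{-1}] = K[f_{1},\dots,f_{m}]$ is a polynomial ring and the residues $\bar f_{1},\dots,\bar f_{m}\in\bar R:=R/tR$ are algebraically independent over $k$, then in fact $R = A[f_{1},\dots,f_{m}]\cong A^{[m]}$, and the $\bar f_{i}$ are then automatically a coordinate system of $\bar R$. Indeed, for $w\in R\subseteq K[f_{1},\dots,f_{m}] = A[f_{1},\dots,f_{m}][t^{-1}]$ pick $N\ge 0$ minimal with $t^{N}w = P(f_{1},\dots,f_{m})$, $P\in A[u_{1},\dots,u_{m}]$; if $N\ge1$ then $P(f_{1},\dots,f_{m})\in tR$, hence $\bar P(\bar f_{1},\dots,\bar f_{m}) = 0$ in $\bar R$, hence $\bar P = 0$ and $P\in tA[u_{1},\dots,u_{m}]$, contradicting minimality; so $N = 0$ and $w\in A[f_{1},\dots,f_{m}]$.

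\emph{The case $n=1$.} Clearing denominators in a coordinate of $R[t^{-1}]\cong K^{[1]}$ gives $x\in R$ with $R[t^{-1}] = K[x]$; note $x$ is transcendental over $K$, so $x\notin K$. If $\bar x\in\bar R\cong k^{[1]}$ is transcendental over $k$, the observation yields $R = A[x]\cong A^{[1]}$. Otherwise $\bar x = \bar c$ with $c\in A$, and $x':=(x-c)/t\in R$ again satisfies $R[t^{-1}] = K[x']$; replace $x$ by $x'$ and repeat. Were this infinite, $x$ would lie in $\bigcap_{j}(A+t^{j}R)$; a routine Krull-intersection argument (using that $R$, a domain with $tR\ne R$, is $t$-adically separated, and that $R\cap\hat A = A$ inside $R\otimes_{A}\hat A$ by faithful flatness of $\hat A$) then forces $x\in A\subseteq K$, which is absurd. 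So the process stops. (This case is also classical and could simply be quoted.)

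\emph{The case $n=2$.} Again clear denominators to get $x_{1},y_{1}\in R$ with $R[t^{-1}] = K[x_{1},y_{1}]$ a polynomial ring, and consider $\bar x_{1},\bar y_{1}\in\bar R\cong k^{[2]}$. If these are algebraically independent over $k$, the divide-by-$t$ observation applies verbatim and gives $R = A[x_{1},y_{1}]\cong A^{[2]}$. Thus all the content sits in the case where $\bar x_{1},\bar y_{1}$ are algebraically \emph{dependent} in $\bar R\cong k^{[2]}$, and here I would argue exactly as in the proof of Lemma~\ref{linearization.lem}: by \name{Sathaye}'s results \cite[Remark~2.1]{Sa1983Polynomial-ring-in} and \cite[Theorem~3(1)]{Sa1983Polynomial-ring-in}, $k[\bar x_{1},\bar y_{1}]\subset\bar R$ is a one-variable polynomial ring, the minimal relation $F(\bar x_{1},\bar y_{1}) = 0$ has $F$ a variable of $k[w,z]$, and, taking the conjugate variable $H$ (so $k[w,z] = k[F,H]$) and the largest $s\ge 1$ with $F(x_{1},y_{1})\in t^{s}R$, the pair $\bigl(t^{-s}F(x_{1},y_{1}),\,H(x_{1},y_{1})\bigr)$ is once more a coordinate system of $R[t^{-1}]$ lying in $R$. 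One then iterates.

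The step I expect to be the genuine obstacle is the termination of this last induction. In Lemma~\ref{linearization.lem} the strictly decreasing measure $N(x_{1},y_{1}) = n(x_{1},y_{1})+m(x_{1},y_{1})$ was defined in terms of the \emph{given} polynomial coordinates of $R = A^{[2]}$, which are not at hand here; so one needs an intrinsic substitute, controlling the chain of subrings $A[x_{1},y_{1}]\subseteq R$ by means of the normality (indeed regularity) of $R$ — which holds since $R$ is flat over the regular ring $A$ with smooth fibres — together with the finiteness of $R$ as an $A$-algebra. Making this precise is exactly the content of \name{Sathaye}'s ``polynomial ring in two variables over a discrete valuation ring'' criterion \cite{Sa1983Polynomial-ring-in}, whose main theorem \emph{is} the case $n=2$; so one may also just invoke it. (The coefficients of $F,H$ lie in $k$ rather than $A$, but in the intended application $A = \OOO_{Y,y}$ for a closed point $y$ of a complex variety, so $k = \CC\subset A$ and this is not an issue; the general case is handled in \cite{Sa1983Polynomial-ring-in}.)
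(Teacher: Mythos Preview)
The paper does not actually prove this proposition: it simply records that the case $n=1$ is well known and that the case $n=2$ is precisely \name{Sathaye}'s Theorem~1 in \cite{Sa1983Polynomial-ring-in}. Your proposal is consistent with this---you supply a correct direct argument for $n=1$ and, for $n=2$, sketch the inductive improvement (parallel to the proof of Lemma~\ref{linearization.lem}) before rightly observing that its termination is exactly the substance of \name{Sathaye}'s criterion, which you then invoke just as the paper does.
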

For both propositions the case $n=1$ is well-known and not difficult to prove. As for the case $n=2$ the first proposition follows from the amalgamed product structure of the automorphism group of the algebra $K[x,y]$ (see \cite{Ka1975On-the-absence-of-}), and the second proposition is proved in \cite[Theorem~1]{Sa1983Polynomial-ring-in}.

\begin{rem}\lab{discrete.rem}
In case $n=1$ there is the following stronger version of Proposition~\ref{discval.prop} which does not assume that the morphism is affine, see \cite[Proposition~1.4]{KaWr1985Flat-families-of-a}. {\it If $\phi\colon X \to \Spec A$ is faithfully flat of finite type such that the generic fiber and the special fiber are both affine lines, then $X\simto \Spec A[t]$.}
\end{rem}

The proof of Theorem~\ref{fibration.thm} will be given in a series of lemmas. Let $\phi\colon X \to Y$ be an affine fibration with fiber $\AA^{n}$ where $n=1$ or $=2$. We can clearly assume that $Y$ is affine.
\begin{lem}\lab{lemA}
There is a dense open set $U\subset Y$ such that $\phi^{-1}(U)\to U$ is a trivial fiber bundle.
\end{lem}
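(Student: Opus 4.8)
The plan is to trivialize $\phi$ over a dominant \'etale base change by means of the Generic Equivalence Theorem, and then to descend to a dense open subset of $Y$ using the fact that $\AA^{1}$ and $\AA^{2}$ admit no nontrivial forms over a field of characteristic zero (Proposition~\ref{noforms.prop}). As already observed we may assume $Y$ affine; treating the irreducible components of $Y$ one at a time we may further assume $Y$ irreducible, gluing the trivializations over a disjoint union of dense open sets at the end.

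First I would apply the Generic Equivalence Theorem to the two affine morphisms $\phi\colon X\to Y$ and $\pr\colon\AA^{n}\times Y\to Y$. By the definition of an affine fibration every (schematic) fiber $\phi^{-1}(y)$, $y\in Y$, is reduced and isomorphic to $\AA^{n}_{\CC}$, hence isomorphic to the corresponding fiber of $\pr$; since $\CC$ has infinite transcendence degree over $\QQ$, the hypotheses of the theorem are satisfied. We obtain a dominant \'etale morphism $\psi\colon U'\to Y$ and an isomorphism $X\times_{Y}U'\simeq\AA^{n}\times U'$ over $U'$.

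Next I would pass to generic points. Put $K:=\CC(Y)$ and choose an irreducible component of $U'$ dominating $Y$; since $\psi$ is \'etale, hence quasi-finite, the residue field $L$ at its generic point is a finite extension of $K$. Base changing the above isomorphism along $\Spec L\to U'$ gives $X_{K}\times_{K}L\simeq\AA^{n}_{L}$, where $X_{K}:=X\times_{Y}\Spec K$ is the affine generic fiber of $\phi$. Writing $R:=\OOO(X_{K})=\OOO(X)\otimes_{\OOO(Y)}K$, a finitely generated $K$-algebra since $X$ and $Y$ are affine varieties, we have $L\otimes_{K}R\simeq L^{[n]}$ with $\Char K=0$ and $n\in\{1,2\}$; hence Proposition~\ref{noforms.prop} gives $R\simeq K^{[n]}$, i.e. $X_{K}\simeq\AA^{n}_{K}$.

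It remains to spread this isomorphism out over a neighbourhood of the generic point of $Y$. An isomorphism $K^{[n]}\simto R$ together with its inverse is given by finitely many elements of $R$ and of $\OOO(Y)$ and by the finitely many relations expressing that the two maps are mutually inverse, and all of this data is defined over a suitable localization $\OOO(Y)_{f}$ with $0\neq f\in\OOO(Y)$. Therefore $\OOO(X)_{f}:=\OOO(X)\otimes_{\OOO(Y)}\OOO(Y)_{f}\simeq\OOO(Y)_{f}^{[n]}$, so that with $U:=Y_{f}$ one has $\phi^{-1}(U)\simeq\AA^{n}\times U$ over $U$, a trivial fiber bundle. The only substantial external input is Proposition~\ref{noforms.prop}; the conceptual point here --- the step that turns ``\'etale-locally trivial'' into ``trivial over a Zariski-dense open'', and the reason no discussion of the structure group is needed --- is that the Generic Equivalence Theorem transports the structure of the closed fibers to the generic fiber, after which the no-forms result and routine spreading out finish the proof.
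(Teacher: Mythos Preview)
Your proof is correct and follows essentially the same route as the paper: apply the Generic Equivalence Theorem to trivialize after a dominant \'etale base change, use Proposition~\ref{noforms.prop} at the generic point to descend the trivialization to $K=\CC(Y)$, and then spread out to a principal open $Y_f$. The paper's version is just more terse (it writes $\CC(U)\otimes_{\CC(Y)}\OOO(X)\simeq\CC(U)^{[n]}$ and immediately invokes Proposition~\ref{noforms.prop} without isolating a finite subextension, since the proposition does not require $L/K$ to be finite).
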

\begin{proof} By the Generic Isotriviality Theorem in section~\ref{GenIso.sec} there is an \'etale morphism $U\to Y$ where $U$ is affine such that the bundle $U\times_{Y}X \to U$ is trivial. Therefore, 
$\CC(U)\otimes_{\CC(Y)}\OOO(X)\simeq \CC(U)^{[n]}$,  and so $\CC(Y)\otimes_{\OOO(Y)} \OOO(X)\simeq \CC(Y)^{[n]}$ by Proposition~\ref{noforms.prop}. Hence there is an $f\in\OOO(Y)$ such that $\OOO(X)_{f}\simeq \OOO(Y)_{f}^{[n]}$.
\end{proof}
\begin{lem}\lab{lemB}
Now assume that $Y$ is normal.
Let $D\subset Y$ be an irreducible hypersurface such that $\OOO(Y)_{D}$ is normal. Then there is an $f\in\OOO(Y)\setminus I(D)$ such that  $\phi\colon X_{f}\to Y_{f}$ is a trivial bundle.
\end{lem}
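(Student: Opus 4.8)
The plan is to reduce the statement to Proposition~\ref{discval.prop} by localizing $Y$ at the generic point of $D$, and then to spread the resulting isomorphism out over a Zariski neighbourhood of that point. We may assume $Y$ affine and irreducible, so normal. Put $A:=\OOO(Y)_D$, the localization of $\OOO(Y)$ at the prime ideal $I(D)$; since $Y$ is normal and $D$ has codimension one, $A$ is a discrete valuation ring with fraction field $K:=\CC(Y)$, maximal ideal $\mathfrak m$ and residue field $k:=\CC(D)$, all of characteristic zero. Let $R:=\OOO(X)\otimes_{\OOO(Y)}A$, the coordinate ring of $X\times_Y\Spec A$. As $\phi$ is affine, flat and dominant with $X$ a variety, $R$ is a domain containing $A$, finitely generated and flat over $A$.

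The heart of the argument is to check that the generic and special fibres of $R$ over $A$ are both polynomial. For the generic fibre, $K\otimes_A R\simeq\CC(Y)\otimes_{\OOO(Y)}\OOO(X)\simeq K^{[n]}$ directly by Lemma~\ref{lemA}. For the special fibre, $R/\mathfrak m R\simeq\CC(D)\otimes_{\OOO(D)}\OOO(\phi^{-1}(D))$ is the coordinate ring of the generic fibre of the restricted morphism $\phi_D\colon\phi^{-1}(D)\to D$. Now $\phi_D$ is the base change of $\phi$ along $D\hookrightarrow Y$, so it is affine and flat, its fibre over any closed point $d\in D$ equals $\phi^{-1}(d)\simeq\An$, and $\phi^{-1}(D)$ is reduced and irreducible (reduced because $\phi$ is flat with reduced fibres and $D$ is reduced; irreducible because $\phi_D$ is flat over the irreducible $D$ and its generic fibre is geometrically irreducible, geometric irreducibility of the fibres of $\phi$ being a constructible condition on $Y$ that holds at every closed point of $D$). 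Hence $\phi_D$ is again an affine fibration with fibre $\An$ over the irreducible affine variety $D$, and Lemma~\ref{lemA} applied to it gives $R/\mathfrak m R\simeq\CC(D)^{[n]}=k^{[n]}$. Since $n=1$ or $n=2$, Proposition~\ref{discval.prop} now yields an $A$-algebra isomorphism $R\simeq A^{[n]}$.

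Finally I would spread this out. Choose $u_1,\dots,u_n\in R$ with $R=A[u_1,\dots,u_n]$ a polynomial ring; then the $u_i$ are algebraically independent over $A$, hence over $K$. Fix $\OOO(Y)$-algebra generators of $\OOO(X)$, write each of them as a polynomial in the $u_i$ with coefficients in $A$, and write each $u_i$ as a fraction with numerator in $\OOO(X)$ and denominator in $\OOO(Y)\setminus I(D)$; then a single $f\in\OOO(Y)\setminus I(D)$ clears all the denominators occurring. For this $f$ one gets, as subrings of $R$, the equality $\OOO(X)_f=\OOO(Y)_f[u_1,\dots,u_n]$, and since the $u_i$ are algebraically independent over $\OOO(Y)_f$ the latter ring is a polynomial ring; thus $\OOO(X)_f\simeq\OOO(Y)_f^{[n]}$ and $\phi\colon X_f\to Y_f$ is a trivial bundle, with $f\notin I(D)$ as required. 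The one place that needs genuine care is the identification of the special fibre: one must verify that the restriction $\phi_D$ is still a bona fide affine $\An$-fibration over a variety, so that Lemma~\ref{lemA} applies to it again. Everything else is the discrete-valuation-ring input of Proposition~\ref{discval.prop} together with a routine spreading-out.
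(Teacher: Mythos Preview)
Your proof is correct and follows essentially the same route as the paper: localize at the generic point of $D$ to get a DVR $A=\OOO(Y)_D$, use Lemma~\ref{lemA} twice (once for $\phi$ over $Y$ to identify the generic fibre, once for $\phi|_{\phi^{-1}(D)}$ over $D$ to identify the special fibre), invoke Proposition~\ref{discval.prop} to get $R\simeq A^{[n]}$, and spread out. The paper's version is terser---it simply asserts that $\phi^{-1}(D)\to D$ is an $\AA^n$-fibration and that ``the claim follows'' after obtaining $R\simeq A^{[n]}$---while you spell out the irreducibility of $\phi^{-1}(D)$ and the clearing-of-denominators step; these are exactly the right details to check, but they do not constitute a different argument.
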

\begin{proof}
The morphism $E:=\phi^{-1}(D) \to D$ is a fibration with fiber $\AA^{n}$, and so $\CC(D)\otimes_{\OOO(D)}\OOO(E) \simeq \CC(D)^{[n]}$ by Lemma~\ref{lemA}.
By assumption, $A:=\OOO(Y)_{D}$ is a discrete valuation ring with quotient field $K:=\CC(Y)$ and residue field $k:=\CC(D)$. Moreover, $R:=\OOO(Y)_{D}\otimes_{\OOO(Y)}\OOO(X)$ is a domain, finitely generated and flat over $A$, such that 
$K\otimes_{A} R = \CC(Y)\otimes_{\OOO(Y)}\OOO(X) \simeq K^{[n]}$ by Lemma~\ref{lemA}, and  
\begin{align*}
k\otimes_{A}R &= \CC(D)\otimes_{\OOO(Y)_{D}} R=\CC(D)\otimes_{\OOO(Y)}\OOO(X)
= \CC(D)\otimes_{\OOO(D)}\OOO(D)\otimes_{\OOO(Y)} \OOO(X)\\ &=\CC(D)\otimes_{\OOO(D)}\OOO(E)\simeq\CC(D)^{[n]} 
= k^{[n]}.
\end{align*}
Therefore, by Lemma~\ref{lemB}, we get $\OOO(Y)_{D}\otimes_{\OOO(Y)}\OOO(X)\simeq\OOO(Y)_{D}^{[n]}$, and the claim follows.
\end{proof}
Define $\Ybd\subset Y$ to be the union of all open subsets $U\subset Y$  such that $\phi^{-1}(U)\to U$ is a trivial bundle. 

\begin{proof}[Proof of Theorem~\ref{fibration.thm}(b)] 
It follows from Lemma~\ref{lemB}  that for a normal variety $Y$ the complement $Y\setminus\Ybd$ has codimension at least $2$. Hence, if $Y$ is a normal curve, then $\Ybd=Y$.
\end{proof}

\begin{rem}
We have shown more generally that for an affine $\Atwo$-fibration $\phi\colon X \to Y$ where $Y$ is normal the open set $\Ybd\subset Y$ where $\phi$ is a bundle has a complement of codimension at least 2.
\end{rem}

\begin{lem}\lab{lemC}
Let $\phi\colon X \to Y$ be an $\Aone$-bundle. Assume that there are two section $\sigma,\tau\colon Y \to X$ such that $\sigma(y)\neq\tau(y)$ for all $y\in Y$. Then the bundle is trivial.
\end{lem}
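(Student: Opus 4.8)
The plan is to use the very explicit structure of the structure group $\Aut(\Aone)$ --- the group of affine transformations $u\mapsto au+b$ with $a\in\Cst$, $b\in\CC$ --- together with the two sections to rigidify a Zariski\nobreakdash-local trivialization of $\phi$ until every transition function becomes the identity. First, recall that an $\Aone$-bundle is locally trivial in the Zariski topology (because $\Aut(\Aone)$ is a special group, as noted above), so we may choose an open cover $Y=\bigcup_i U_i$, with the $U_i$ affine after refining, and $U_i$-isomorphisms $h_i\colon\phi^{-1}(U_i)\simto U_i\times\Aone$. In these coordinates the two sections read $h_i\circ\sigma|_{U_i}=(\Id,s_i)$ and $h_i\circ\tau|_{U_i}=(\Id,t_i)$ with $s_i,t_i\in\OOO(U_i)$, and the hypothesis $\sigma(y)\neq\tau(y)$ for all $y$ says exactly that $t_i-s_i\in\OOO(U_i)$ vanishes nowhere, hence is a unit.

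Next I would replace $h_i$ by the trivialization $h_i'$ obtained by post-composing with the fibrewise affine automorphism $(y,u)\mapsto\bigl(y,\,(u-s_i(y))/(t_i(y)-s_i(y))\bigr)$ of $U_i\times\Aone$ --- legitimate precisely because $t_i-s_i$ is invertible. After this normalization $\sigma$ corresponds to the zero section $u=0$ and $\tau$ to the constant section $u=1$ in every chart. Consequently, on each overlap $U_i\cap U_j$ the transition automorphism $g_{ij}:=h_i'\circ(h_j')^{-1}$ is a fibrewise affine map $u\mapsto a_{ij}u+b_{ij}$ fixing the function $0$ (since $\sigma$ is globally defined and sits on the zero section in both charts) and the function $1$ (same for $\tau$); therefore $b_{ij}=0$ and $a_{ij}=1$, i.e.\ $g_{ij}=\Id$. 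Thus the $h_i'$ agree on overlaps, glue to a global $Y$-morphism which is locally --- hence globally --- an isomorphism $X\simto Y\times\Aone$, and the bundle is trivial.

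The only genuinely substantive step, and the one I would single out as the heart of the matter, is the passage from ``$\sigma$ and $\tau$ never meet'' to ``$t_i-s_i$ is a unit'': this is what makes the normalizing automorphism exist and is the sole place the disjointness hypothesis enters. Everything after that is the standard bookkeeping of a \v{C}ech $1$-cocycle with values in the sheaf of affine transformations of $\Aone$; no cohomology vanishing and no hypothesis on $Y$ is needed, and in particular $Y$ may be arbitrary.
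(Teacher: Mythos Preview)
Your proof is correct and is essentially the same argument as the paper's, just packaged in \v{C}ech language: the paper defines the global map $\eta\colon X\to\CC$ sending $x$ to the image of $x$ under the unique affine isomorphism of the fiber taking $\sigma(\phi(x))\mapsto 0$ and $\tau(\phi(x))\mapsto 1$, and then checks it is a morphism using local triviality, whereas you normalize each local chart by that same affine map and then observe the transition functions become the identity. Both hinge on exactly the point you flag---that disjointness of the sections makes $t_i-s_i$ a unit so the normalizing map exists.
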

\begin{proof} Given two points $a,b\in\Aone$, $a\neq b$, there is a uniquely defined morphism $\rho_{a,b}\colon \Aone \to \CC$ such that $\rho_{a,b}(a)=0$ and $\rho_{a,b}(b)=1$, and this morphism is an isomorphism. 
Now define a map $\eta\colon X \to \CC$ by
$$
\eta(x):= \rho_{\sigma(y),\tau(y)}(x) \quad\text{where }y:=\phi(x).
$$
This map is well-defined and induces an isomorphism on every fiber $\phi^{-1}(y)$. We claim that $\eta\colon X \to \CC$ is a morphism. This is obvious if  the bundle is trivial, hence follows in general, because the bundle is locally trivial. Now we claim that the morphism $(\eta,\phi)\colon X \to \CC\times Y$ is an isomorphism. Again, this is obvious if the bundle is trivial, and thus follows in general from the local triviality.
\end{proof}

\begin{proof}[Proof of Theorem~\ref{fibration.thm}(a)] 
Define $\Yt:=X \times_{Y} X$ and let $\psi\colon \Yt \to Y$ be the canonical morphism $\psi(x,x'):=\phi(x)$ ($=\phi(x')$). By definition, $\psi$ is smooth and the pull-back fibration $\phit\colon \Xt:=\Yt\times_{Y}X\to \Yt$ has two sections $\sigma,\tau\colon \Yt\to\Xt$, $\sigma(x,x'):=(x,x',x)$ and $\tau(x,x'):=(x,x',x')$. These sections are disjoint on $\Yt':=\Yt\setminus\{(x,x)\mid x\in X\}$ where $\psi'\colon\Yt'\to Y$ is still smooth and surjective. Now it suffices to prove that over any affine open set $U\subset \Yt'$ the fibration $\phit^{-1}(U)\to U$ is a trivial bundle. 

Lemma~\ref{lemC} implies that $\phi^{-1}(\Ubd)\simeq \Ubd\times \Aone$. Since the complement $Y\setminus\Ybd$ has codimension at least $2$ the same is true for $U\setminus\Ubd$ and for $\phit^{-1}(U)\setminus \phit^{-1}(\Ubd)$. But $U$ and $\phit^{-1}(U)$ are normal affine varieties, and so finally we get
$$
\OOO(\phit^{-1}(U)) = \OOO(\phit^{-1}(\Ubd))\simeq\OOO(\Ubd\times\Aone)=\OOO(\Ubd)[t] = \OOO(U)[t],
$$
hence $\phit^{-1}(U) \simeq U \times \Aone$.
\end{proof}

\par\bigskip\bigskip

\begin{thebibliography}{KKMLR97}

\bibitem[BCW82]{BaCoWr1982The-Jacobian-conje}
Hyman Bass, Edwin~H. Connell, and David Wright, \emph{The {J}acobian
  conjecture: reduction of degree and formal expansion of the inverse}, Bull.
  Amer. Math. Soc. (N.S.) \textbf{7} (1982), no.~2, 287--330. \MR{663785
  (83k:14028)}

\bibitem[BCW77]{BaCoWr1976Locally-polynomial2}
\bysame, \emph{Locally polynomial algebras are
  symmetric algebras}, Invent. Math. \textbf{38} (1976/77), no.~3, 279--299.
  \MR{0432626 (55 \#5613)}

\bibitem[FM10]{FuMa2010A-characterization}
Jean-Philippe Furter and Stefan Maubach, \emph{A characterization of semisimple
  plane polynomial automorphisms}, J. Pure Appl. Algebra \textbf{214} (2010),
  no.~5, 574--583. \MR{2577663}

\bibitem[Har77]{Ha1977Algebraic-geometry}
Robin Hartshorne, \emph{Algebraic geometry}, Graduate Texts in Mathematics, No.
  52, Springer-Verlag, New York, 1977.

\bibitem[Jan87]{Ja1987Representations-of}
Jens~Carsten Jantzen, \emph{Representations of algebraic groups}, Pure and
  Applied Mathematics, vol. 131, Academic Press Inc., Boston, MA, 1987.
  \MR{899071 (89c:20001)}

\bibitem[Jan03]{Ja2003Representations-of}
\bysame, \emph{Representations of algebraic groups}, second ed., Mathematical
  Surveys and Monographs, vol. 107, American Mathematical Society, Providence,
  RI, 2003. \MR{2015057 (2004h:20061)}

\bibitem[Kam75]{Ka1975On-the-absence-of-}
T.~Kambayashi, \emph{On the absence of nontrivial separable forms of the affine
  plane}, J. Algebra \textbf{35} (1975), 449--456. \MR{0369380 (51 \#5613)}

\bibitem[Kam79]{Ka1979Automorphism-group}
\bysame, \emph{Automorphism group of a polynomial ring and algebraic group
  action on an affine space}, J. Algebra \textbf{60} (1979), no.~2, 439--451.
  \MR{549939 (81e:14026)}

\bibitem[KW85]{KaWr1985Flat-families-of-a}
T.~Kambayashi and David Wright, \emph{Flat families of affine lines are
  affine-line bundles}, Illinois J. Math. \textbf{29} (1985), no.~4, 672--681.
  \MR{806473 (87c:14066)}

\bibitem[KKMLR97]{KaKoMa1997C-actions-on-C3-ar}
S.~Kaliman, M.~Koras, L.~Makar-Limanov, and P.~Russell,
  \emph{$\mathbb{C}^*$-actions on $\mathbb{C}^3$ are linearizable}, Electron.
  Res. Announc. Amer. Math. Soc. \textbf{3} (1997), 63--71 (electronic).
  \MR{MR1464577 (98i:14046)}

\bibitem[KZ04]{KaZa2004Venereau-polynomia}
\bysame, \emph{V{\'e}n{\'e}reau polynomials and related fiber bundles}, J. Pure
  Appl. Algebra \textbf{192} (2004), no.~1-3, 275--286. \MR{MR2067200
  (2005d:14093)}

\bibitem[KZ01]{KaZa2001Families-of-affine}
Shulim Kaliman and Mikhail Zaidenberg, \emph{Families of affine planes: the
  existence of a cylinder}, Michigan Math. J. \textbf{49} (2001), no.~2,
  353--367. \MR{1852308 (2002e:14106)}

\bibitem[Kno91]{Kn1991Nichtlinearisierba}
Friedrich Knop, \emph{Nichtlinearisierbare {O}perationen halbeinfacher
  {G}ruppen auf affinen {R}{\"a}umen}, Invent. Math. \textbf{105} (1991),
  no.~1, 217--220. \MR{MR1109627 (92c:14046)}

\bibitem[Kra96]{Kr1996Challenging-proble}
Hanspeter Kraft, \emph{Challenging problems on affine {$n$}-space},
  Ast{\'e}risque (1996), no.~237, Exp.\ No.\ 802, 5, 295--317, S{{\'e}}minaire
  Bourbaki, Vol. 1994/95. \MR{MR1423629 (97m:14042)}

\bibitem[KK96]{KrKu1996Equivariant-affine}
Hanspeter Kraft and Frank Kutzschebauch, \emph{Equivariant affine line bundles
  and linearization}, Math. Res. Lett. \textbf{3} (1996), no.~5, 619--627.
  \MR{MR1418576 (97h:14065)}

\bibitem[KS92]{KrSc1992Reductive-group-ac}
Hanspeter Kraft and Gerald~W. Schwarz, \emph{Reductive group actions with
  one-dimensional quotient}, Inst. Hautes {\'E}tudes Sci. Publ. Math. (1992),
  no.~76, 1--97. \MR{MR1215592 (94e:14065)}

\bibitem[KP85]{KrPo1985Semisimple-group-a}
Hanspeter Kraft and Vladimir~L. Popov, \emph{Semisimple group actions on the
  three-dimensional affine space are linear}, Comment. Math. Helv. \textbf{60}
  (1985), no.~3, 466--479. \MR{MR814152 (87a:14039)}

\bibitem[Kum02]{Ku2002Kac-Moody-groups-t}
Shrawan Kumar, \emph{Kac-{M}oody groups, their flag varieties and
  representation theory}, Progress in Mathematics, vol. 204, Birkh{\"a}user
  Boston Inc., Boston, MA, 2002. \MR{1923198 (2003k:22022)}

\bibitem[Sat83]{Sa1983Polynomial-ring-in}
A.~Sathaye, \emph{Polynomial ring in two variables over a {DVR}: a criterion},
  Invent. Math. \textbf{74} (1983), no.~1, 159--168. \MR{722731 (85j:14098)}

\bibitem[Sch89]{Sc1989Exotic-algebraic-g}
Gerald~W. Schwarz, \emph{Exotic algebraic group actions}, C. R. Acad. Sci.
  Paris S{\'e}r. I Math. \textbf{309} (1989), no.~2, 89--94. \MR{MR1004947
  (91b:14066)}

\bibitem[Sha66]{Sh1966On-some-infinite-d}
I.~R. Shafarevich, \emph{On some infinite-dimensional groups}, Rend. Mat. e
  Appl. (5) \textbf{25} (1966), no.~1-2, 208--212. \MR{0485898 (58 \#5697)}

\bibitem[Sha81]{Sh1981On-some-infinite-d}
\bysame, \emph{On some infinite-dimensional groups. {II}}, Izv. Akad. Nauk SSSR
  Ser. Mat. \textbf{45} (1981), no.~1, 214--226, 240. \MR{607583 (84a:14021)}

\bibitem[Sha95]{Sh1995Letter-to-the-edit}
\bysame, \emph{Letter to the editors: ``{O}n some infinite-dimensional groups.
  {II}'' [{I}zv.\ {A}kad.\ {N}auk {SSSR} {S}er.\ {M}at.\ {\bf 45} (1981), no.\
  1, 214--226, 240; {MR}0607583 (84a:14021)]}, Izv. Ross. Akad. Nauk Ser. Mat.
  \textbf{59} (1995), no.~3, 224. \MR{1347084 (96e:14054)}

\bibitem[VD74]{VeDo1974Unipotent-group-sc}
B.~Ju. Ve{\u\i}sfe{\u\i}ler and I.~V. Dolga{\v{c}}ev, \emph{Unipotent group
  schemes over integral rings}, Izv. Akad. Nauk SSSR Ser. Mat. \textbf{38}
  (1974), 757--799. \MR{0376697 (51 \#12872)}

\bibitem[vdK53]{Ku1953On-polynomial-ring}
W.~van~der Kulk, \emph{On polynomial rings in two variables}, Nieuw Arch.
  Wiskunde (3) \textbf{1} (1953), 33--41. \MR{0054574 (14,941f)}
  
\end{thebibliography}
%

\end{document}